\newtheorem{dummy}{dummy}[section]
\newtheorem{lemma}[dummy]{Lemma}
\newtheorem{theorem}[dummy]{Theorem}
\newtheorem{conjecture}[dummy]{Conjecture}
\newtheorem{corollary}[dummy]{Corollary}
\theoremstyle{definition}
\newtheorem{definition}[dummy]{Definition}
\newtheorem{example}[dummy]{Example}
\newtheorem{remark}[dummy]{Remark}
\newcommand{\bC}{\mathbb{C}}
\newcommand{\bN}{\mathbb{N}}
\newcommand{\bP}{\mathbb{P}}
\newcommand{\bR}{\mathbb{R}}
\newcommand{\bZ}{\mathbb{Z}}
\newcommand{\cC}{\mathcal{C}}
\newcommand{\cE}{\mathcal{E}}
\newcommand{\cF}{\mathcal{F}}
\newcommand{\cP}{\mathcal{P}}
\newcommand{\cR}{\mathcal{R}}
\newcommand{\Sh}{\mathit{Sh}}
\newcommand{\Rep}{\mathit{Rep}}
\newcommand{\Hom}{\mathrm{Hom}}
\newcommand{\Perf}{\mathcal{P}\mathrm{erf}}
\newcommand{\Cone}{\mathsf{C}one}
\renewcommand{\SS}{\mathit{SS}}
\newcommand{\cpm}{\mathrm{CPM}}
\newcommand{\CPM}{\mathrm{CPM}}
\renewcommand{\SS}{\mathit{SS}}
\newcommand{\Chord}{\mathrm{Chord}}
\newcommand{\dgCat}{\mathrm{dg}\mathcal{C}\mathrm{at}}
\newcommand{\Cmod}{\bC\text{-mod}}
\newcommand*{\triple}[2][.1ex]{%
  \mathrel{\vcenter{\offinterlineskip%
  \hbox{$#2$}\vskip#1\hbox{$#2$}\vskip#1\hbox{$#2$}}}} 
\newcommand*{\triplerightarrow}{\triple{\rightarrow}}
\begin{document}

\title[Mirror Symmetry in dimension  one and Fourier-Mukai equivalences]{Mirror Symmetry in dimension  one and Fourier-Mukai equivalences}

\begin{abstract}
In this paper we will describe an approach to mirror symmetry for appropriate $1$-dimensional DM stacks of arithmetic genus $g \leq 1$, called \emph{tcnc} curves, which was developed by the author with Treumann and Zaslow in \cite{STZ}. This involves introducing a conjectural sheaf-theoretic model for the Fukaya category of punctured Riemann surfaces. As an application, we will investigate derived equivalences of \emph{tcnc} curves, and generalize classic results of Mukai on dual abelian varieties \cite{M}. 
\end{abstract}

\author{Nicol\`o Sibilla}
\address{Nicol\`o Sibilla, Max Planck Institute for Mathematics, Vivatsgasse 7,
53111 Bonn,
Germany}
\email{sibilla@mpim-bonn.mpg.de}

\keywords{derived category, mirror symmetry, Fourier-Mukai transform}
\subjclass[2000]{14F05, 53D37} 
\maketitle

{\small \tableofcontents}

\section{Introduction}

As originally formulated by Kontsevich \cite{K}, Homological Mirror Symmetry (from now on, HMS) relates the derived category of coherent sheaves on a Calabi-Yau variety $X$, $D^b(Coh(X))$, and the Fukaya category of a symplectic manifold $\hat{X}$, by stating that if $X$ and $\hat{X}$ are mirror partners, then $D^b(Coh(X)) \cong Fuk(\hat{X})$. Since its proposal, much work has been done towards establishing Kontsevich's conjecture in important classes of examples, see \cite{PZ, S1, Sh}, and references therein. 

One of the main obstacles for tackling Kontsevich's conjecture is gaining a sufficient understanding of the Fukaya category.\footnote{For foundational material on the Fukaya category, the reader should consult \cite{FOOO}, and \cite{S}.} Starting in 2009, in various talks, Kontsevich has argued \cite{K1} that the Fukaya category of a Stein manifold should have good local-to-global properties, and therefore conjecturally could be recovered as the global sections of a suitable sheaf of dg categories.\footnote{For a sampling of some of the relevant work in this direction, see also \cite{S2,S3,N1,NT}.} This is in keeping with previous work of Nadler and Zaslow who, in \cite{NZ} and \cite{N}, establish an equivalence between the Fukaya category of exact Lagrangians in a cotangent bundle $T^*X$, and the dg category of complexes of cohomologically constructible sheaves over $X$, $Sh(X)$.\footnote{From now on, we will refer to objects in $Sh(X)$ simply as `constructible sheaves.' For a comprehensive introduction to the subject we refer the reader to \cite{KS}.}  
 
Following Kontsevich's insight, in \cite{STZ}, joint with Treumann and Zaslow, we equip the Lagrangian skeleton of a punctured Riemann surface $\Sigma$ with a sheaf of dg categories, called $\cpm(-)$,\footnote{$\cpm$ stands for `constructible plumbing model,' as this framework can be applied  more generally to investigate the Fukaya category of a plumbing of cotangent bundles, for which see also \cite{A}.} such that its local behavior is dictated by Nadler and Zaslow's work on cotangent bundles, while its global sections are conjecturally quasi-equivalent to the Fukaya category of compact exact Lagrangians in $\Sigma$, $Fuk(\Sigma)$. Further, in \cite{STZ}, using this model as a stand in for the Fukaya category, we prove a version of HMS which pairs suitable stacky, degenerate elliptic curves, called \emph{tcnc} curves (see Section \ref{sec:tcnc}), and punctured symplectic tori. 

In the first part of this paper, we give a quick review of the results contained in \cite{STZ}, with a special emphasis on motivations and examples.
In Section \ref{sec:cpm}, after introducing the necessary background, we define $\cpm(-)$ as a sheaf of dg categories on a suitable Grothendieck site of decorated ribbon graphs, and open inclusions. The applications to mirror symmetry are explained in Section \ref{sec:hms}. Given a tcnc curve $C$, we explain how to construct a ribbon graph $D_{\hat{C}}$, which arises as the skeleton of a punctured symplectic torus $\hat{C}$, and we prove that there is an equivalence $\Perf(C) \cong \cpm(D_{\hat{C}})$. Granting the conjectural equivalence $\cpm(D_{\hat{C}}) \cong Fuk(\hat{C})$, we obtain a HMS statement relating $C$ and $\hat{C}$.

The HMS statement proved in \cite{STZ} can be used to explore the algebraic geometry of tcnc curves. In Section \ref{sec:deq} we prove that, up to derived equivalence, tcnc curves are classified by the sum of the orders of the isotropy groups at the nodes.  This generalizes work of Mukai on derived auto-equivalences of smooth elliptic curves \cite{M}, and of Burban and Kreussler who considered the case of the nodal $\bP^1$ \cite{BK1}.
From the standpoint of mirror symmetry, this result corresponds to the simple fact that the Fukaya category of a punctured Riemann surface depends exclusively on genus, and number of punctures. 

{\bf Acknowledgments:} I wish to thank the organizers of ``Mirror Symmetry and Tropical Geometry'' for making possible this very stimulating event. 
It is a pleasure to thank David Treumann and Eric Zaslow for numerous conversations about the results described here, and for our collaboration \cite{STZ}, which is the starting point of this project. 
 
\section{A model for the Fukaya category of punctured Riemann surfaces}
\label{sec:cpm}
In this section we review the construction of $\cpm(-)$. We will follow closely the exposition of \cite{STZ}, but we shall gloss over many technical aspects of the theory, for which we refer the reader to the original paper.
Section \ref{sec:mic} contains a brief overview of definitions and results from microlocal sheaf theory which will be needed later, and a preliminary, `local,' definition of $\cpm(-)$. Section \ref{sec:quiv} discusses a useful dictionary between category of sheaves, and categories of quiver representations. In Section \ref{sec:chord} we introduce the notion of \emph{chordal} ribbon graph, and give the full definition of $\cpm(-)$, as a sheaf of dg categories over the Grothendieck site of chordal ribbon graphs. 

Before proceeding, it is convenient to clarify what we mean by sheaf of dg categories. Recall that, following Tabuada \cite{Tab}, we can equip the category of small dg categories,
$\dgCat$, with a model structure. For us, a sheaf on a site $\cC$ with values in a model category $\mathcal{D}$ is a pre-sheaf $F$, such that, whenever $S = \{U_i\}$ is a covering sieve for $U \in \cC$, the diagram 
$$
F(U) \rightarrow [\Pi_i F(U_i) \rightrightarrows \Pi_{i,j} F(U_i \times_U U_j)  \triplerightarrow \dots]
$$ is a \emph{homotopy} limit in $\mathcal{D}$. The sheaf property can be verified in practice quite easily, using the following description of equalizers in $\dgCat$.
\begin{lemma}
\label{lem:ho eq}
Let $\xymatrix{\ \mathcal{C} \ar@<1ex>[r]^{F} \ar@<-0.5ex>[r]_{G}& \mathcal{C'}}$ be a diagram in $dgCat$, and denote $\cE$ the dg category having
\begin{itemize}
\item as objects, pairs $(C, u)$, where $C \in \mathcal{C}$, and $u: F(C) \rightarrow G(C)$ is a degree zero, closed morphism 
, which becomes invertible in the homotopy category,
\item as morphisms, pairs $(f, H) \in hom^k(C, C')\oplus hom^{k-1}(F(C), G(C'))$, with differential given by $d(f, H) = (df, dH - (u'F(f) - G(f) u))$. The composition is obvious.
\end{itemize}
Then $\cE$, equipped with the forgetful functor $\cE \rightarrow \mathcal{C}$, is a homotopy equalizer for $F$ and $G$.
\end{lemma}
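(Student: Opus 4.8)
The plan is to verify directly that the explicitly constructed dg category $\cE$, together with the forgetful functor $\pi\colon \cE \to \cC$, satisfies the universal property of a homotopy equalizer of $F$ and $G$; equivalently, since $\dgCat$ carries Tabuada's model structure and every object is fibrant, I will show that $\cE$ is the limit of a \emph{fibrant} replacement of the diagram $\cC \rightrightarrows \cC'$, i.e.\ of a diagram in which the two parallel maps have been made into fibrations while remaining weakly equivalent to $F$ and $G$. The standard model for this is the ``homotopy pullback of the path object'': replace $\cC'$ by the dg category $\cC'^{I}$ of morphisms in $\cC'$ that become invertible in the homotopy category (objects are closed degree-zero quasi-isomorphisms $u\colon X\to Y$), equipped with the two endpoint evaluations $\mathrm{ev}_0,\mathrm{ev}_1\colon \cC'^{I}\to\cC'$, which are fibrations and together a weak equivalence onto $\cC'\times\cC'$. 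Then the honest (strict) pullback of $\cC \xrightarrow{(F,G)} \cC'\times\cC' \xleftarrow{(\mathrm{ev}_0,\mathrm{ev}_1)} \cC'^{I}$ computes the homotopy limit, and one checks that this strict pullback is precisely the category $\cE$ described in the statement: an object is a pair $(C,u)$ with $C\in\cC$ and $u$ a quasi-iso $F(C)\to G(C)$, and the morphism complex $\hom_{\cE}((C,u),(C',u'))$ is the fiber product of $\hom_\cC(C,C')$ with the path-object morphism complex over $\hom_{\cC'}(F(C),F(C'))\times\hom_{\cC'}(G(C),G(C'))$, which unwinds to exactly pairs $(f,H)\in\hom^k(C,C')\oplus\hom^{k-1}(F(C),G(C'))$ with the stated twisted differential $d(f,H)=(df,\,dH-(u'F(f)-G(f)u))$.

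Concretely the steps are: (1) recall/define the path dg category $\cC'^{I}$ and check $(\mathrm{ev}_0,\mathrm{ev}_1)\colon \cC'^{I}\to \cC'\times\cC'$ is a fibration and a weak equivalence — this is where one uses that a morphism in $\cC'$ which is invertible in $H^0$ can be ``lifted'' compatibly, and it is the mild homotopical input of the argument; (2) identify the strict fiber product $\cC\times_{\cC'\times\cC'}\cC'^{I}$ on the nose with $\cE$, matching objects, morphism complexes, differential, and composition — this is a bookkeeping computation, the sign in $dH-(u'F(f)-G(f)u)$ being exactly the Leibniz-type term forced by $u$; (3) invoke the general fact that the strict pullback of a fibration along any map is a homotopy pullback in a model category, hence $\cE$ is a homotopy limit of the original cospan, which is the homotopy equalizer of $F$ and $G$; (4) observe that under this identification the projection to the homotopy limit is the forgetful functor $\cE\to\cC$, as claimed.

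The main obstacle — really the only nonformal point — is Step (1): constructing a path object for $\dgCat$ that is adapted to equalizers, i.e.\ exhibiting $\cC'^{I}$ and verifying it is a fibration. In Tabuada's model structure fibrations are the dg analogue of isofibrations (surjective on morphism complexes plus a lifting condition for homotopy equivalences of objects), so one must check that given $(X,Y)\in\cC'\times\cC'$ and a quasi-iso $X\to Y$ one can realize it as an endpoint of a ``path,'' and that the morphism complexes of $\cC'^{I}$ surject appropriately; this is standard but must be stated carefully. Everything after that is formal model-category yoga plus the explicit unwinding in Step (2), and I expect no surprises there — in particular the somewhat unusual-looking morphism complex $\hom^k(C,C')\oplus\hom^{k-1}(F(C),G(C'))$ with its twisted differential should drop out automatically as the morphism complex of the path object pulled back to $\cC$.
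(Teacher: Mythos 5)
Your proposal is correct and follows essentially the same route as the paper, which likewise reduces the claim to the existence of an explicit path object for $\mathcal{C}'$ (citing Lemma 4.1 of \cite{Tab1}) and then computes the homotopy equalizer ``in the usual way'' by fibrant replacement and strict pullback. The only difference is that you propose to construct and verify the path object yourself rather than quoting Tabuada, and you carry out the unwinding of the morphism complexes that the paper leaves to the reader.
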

\begin{proof}
Lemma \ref{lem:ho eq} depends on the availabilty of an explicit construction of the \emph{path object} $P(\mathcal{C'})$ for $\mathcal{C'}$, which can be found in Lemma 4.1 of \cite{Tab1}. This allows us to compute the homotopy equalizer in the usual way, by taking appropriate fibrant replacements. We leave the details to the reader.
\end{proof} 

\subsection{Microlocal sheaf theory in dimension $1$}
\label{sec:mic}
Let $X$ be a manifold, and let $Sh(X)$ be the category of constructible sheaves over $X$.
In \cite{KS}, Kashiwara and Schapira explain how to attach to a constructible sheaf $\cF \in Sh(X)$ a \emph{conical} (i.e. invariant under fiberwise dilation) Lagrangian subset of $T^*X$, called \emph{singular support}, and denoted $SS(\cF)$. Informally, $SS(\cF)$ is an invariant encoding the co-directions along which $\cF$ does not `propagate.' Rather than giving the general definition, for which we refer the reader to Section 5.1 of \cite{KS}, we will describe the singular support in the simpler set up which will be needed in the following. 

Assume that $X$ is a $1$-dimensional manifold equipped with affine structure. Let $x$ be a point of $X$, and let $f$ be an affine $\bR$-valued function on $X$ around $x$.  For $\epsilon > 0$ sufficiently small let $A:=\{y \in X \mid f(y) < f(x) + \epsilon\}$, and $B:=\{y \in X \mid f(y) < f(x) - \epsilon\}$.  We define the functor $\mu_{x,f}:\Sh(X) \to \bC\text{-mod}$ to be the cone of the restriction map
$\Gamma(A;F\vert_A) \to \Gamma(B;F\vert_B).$

Since every constructible sheaf $F$ is locally constant in a deleted neighborhood of $x$, this functor does not depend on $\epsilon$, if $\epsilon$ is small enough.
Also, $\mu_{x,f}$ depends only on $x$ and $df_x$.  When $(x,\xi) \in T^*X$ we let $\mu_{x,\xi}$ denote the functor associated to the point $x$ and the affine function whose derivative at $x$ is $\xi$.

\begin{definition}
For each $F \in \Sh(X)$ we define $\SS(F) \subset T^*X$, the \emph{singular support} of $F$, to be the closure of the set of all $(x,\xi) \in T^*X$ such that $\mu_{x,\xi} F \neq 0$.
\end{definition}

Note thate, as $\mu_{x,\xi} = \mu_{x,t\cdot \xi}$ when $t > 0$, the set $\SS(F)$ is conical. In fact, if $(x,\xi) \in \SS(F)$ and $t \in \bR_{>0}$, then $(x,t \cdot \xi) \in \SS(F)$. Further, $\SS(F)$ is 1-dimensional and therefore a Lagrangian subset of $T^*X$ with its usual symplectic form.

\begin{definition}
Suppose $\Lambda \subset T^*X$ is a conical Lagrangian.  Define $\Sh(X,\Lambda) \subset \Sh(X)$ to be the full triangulated subcategory of sheaves with $\SS(F) \subset \Lambda$.
\end{definition}

\begin{example}
Let $\Lambda = X \cup T_{s_1}^*X \cup \cdots \cup T_{s_n}^* X$ be the union of the zero section and the cotangent spaces of finitely many points $\{s_1,\ldots,s_n\}$.  Then $\Sh(X,\Lambda)$ is the category of sheaves that are locally constant away from $\{s_1,\ldots,s_n\}$.
\end{example}

If $F$ is a sheaf  in $\Sh(X, \Lambda)$, and $\xi \neq 0$, $\mu_{x,\xi}(F) \in \bC\text{-mod}$ should be thought of as the (microlocal) `stalk' of $F$ over $(x, \xi) \in \Lambda \setminus X$. This suggests that sheaves with singular support in $\Lambda$ have a local nature over $\Lambda$, as well as over $X$. The locality of $\Sh(X, \Lambda)$ over $X$ can be encoded in the claim that the assignment: 
$
U \subset^{open} X \mapsto \Sh_{\Lambda}(U) := \Sh(U, T^*U \cap \Lambda),
$ 
defines a sheaf of dg categories over $X$. In Definition \ref{def:cpm1} we will introduce a sheaf of dg categories, denoted $\cpm(-)$, which, in an appropriate sense, is an extension of $\Sh_{\Lambda}(-)$ to $\Lambda$. In particular, we will have $\cpm(\Lambda) \cong \Sh(X, \Lambda)$.

Let $U \subset T^*X$ be an open subset, and let $\cP(X, U)$ be the Verdier quotient of $\Sh(X)$ by the thick subcategory of all sheaves $F$ with $\SS(F) \cap U = \varnothing$ (see \cite{KS}, Section 6.1). Consider the full subcategory of $\cP(X, U)$ spanned by sheaves $F$ such that $SS(F) \cap U \subset \Lambda \cap U$, and denote it $\cP_{\Lambda}(X, U)$. Both $\cP(X, -)$, and $\cP_{\Lambda}(X,-)$, naturally define pre-sheaves of dg categories on $T^*X$. We can therefore consider the sheafification of $\cP_{\Lambda}(X, -)$ over $T^*X$, which we denote $MSh_{\Lambda}(-)$.\footnote{Note that, if $\pi:T^*X \rightarrow X$ is the natural projection, then $\pi_*MSh(-) \cong \Sh_{\Lambda}(-)$.}
\begin{definition}
\label{def:cpm1}
Define $\cpm(-)$ to be the sheaf of dg categories over $\Lambda$ obtained by pulling back $MSh(-)$ along $i$, $\cpm(-) \cong i^*MSh(-)$. 
\end{definition}

\subsection{Microlocal sheaves and quiver representations}
\label{sec:quiv}
Assume that $X$ is a $1$-dimensional manifold and $\Lambda \hookrightarrow T^*X$ is a conical Lagrangian subset. 
The category $\Sh(X, \Lambda)$, and the sheaf $\cpm(-)$ over $\Lambda$, can be described very explicitly in terms of quiver representations.
  
Let us call the connected components of $\Lambda - X$ the \emph{spokes} of $\Lambda$.  They are divided into two groups depending on which component of $T^*X - X$ they fall into.  Using an orientation of $X$ we may label these groups ``upward'' and ``downward.''
The conic Lagrangian $\Lambda$ determines a partition $P_\Lambda$ of $X$ into subintervals (which may be open, half-open, or closed) and points.  Let us describe this partition in the case $X = \bR$, the general case is similar.  Each spoke of $\Lambda$ is incident with a point $x \in \bR$, which we may order $x_1 < \ldots < x_k$.  We put $\{x_i\} \in P_\Lambda$ if $x_i$ is incident with both an upward and a downward spoke.  We put an interval $I$ from $x_i$ to $x_{i+1}$ in $\cP_\Lambda$ whose boundary conditions are determined by the following rules
\begin{itemize}
\item If $x_i$ is incident with an upward spoke but not incident with a downward spoke, then $x_i$ is included in $I$.  Otherwise $x_i$ is not included in $I$.
\item If $x_{i+1}$ is incident with a downward spoke but not incident with an upward spoke, then $x_{i+1}$ is included in $I$.  Otherwise $x_{i+1}$ is not included in $I$.
\end{itemize}
We put $(-\infty,x_1)$ in $P_\Lambda$ if $x_1$ is incident with an upward spoke and $(-\infty,x_1]$ in $P_\Lambda$ if $x_1$ is incident with a downward spoke, and similarly we put $(x_k,\infty)$ (resp. $[x_k,\infty)$) in $P_\Lambda$ if $x_k$ is incident with a downward (resp. upward) spoke.

Define a quiver (that is, directed graph) $Q_\Lambda$ whose vertices are the elements of $P_\Lambda$ and with and edge joining $I$ to $J$ (in that orientation) if the closure of $J$ has nonempty intersection with $I$.  If there are $n$ spokes then this is a quiver of type $A_{n+1}$ (i.e. shaped like the Dynkin diagram $A_{n+1}$) whose edges are in natural bijection with the spokes of $\Lambda$: an upward spoke corresponds to a left-pointing arrow and a downward spoke to a right-pointing arrow.

\begin{theorem}
\label{thm:quiverquiver}
There is a natural equivalence of dg categories
$$\Sh(M;\Lambda) \cong \Rep(Q_\Lambda)$$
If $(x,\xi)$ belongs to a spoke of $\Lambda$ corresponding to an arrow $f$ of $Q_\Lambda$, then under this equivalence the functor $\mu_{x,\xi}$ intertwines with the functor ${\mathsf Cone}(f)$.
\end{theorem}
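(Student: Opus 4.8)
The plan is to let the combinatorics of the stratification $P_\Lambda$ carry the whole argument. First I would reinterpret the singular support condition $\SS(F)\subset\Lambda$ as constructibility with respect to $P_\Lambda$; then identify $P_\Lambda$-constructible complexes with $\Rep(Q_\Lambda)$; and finally read off the behaviour of $\mu_{x,\xi}$ directly from its defining cone.

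For the first point I would compute $\mu_{x,\xi}$ at a spoke point in terms of stalks. Fixing a spoke point $x$ and a nonzero $\xi\in T^*_xX$ and choosing an affine $f$ with $df_x=\xi$, the sets $A$ and $B$ are half-lines with $x\in A\setminus B$, and for an $F$ that is locally constant near $x$ away from $x$ one finds $\Gamma(A;F|_A)\simeq F_x$ and $\Gamma(B;F|_B)\simeq F_{x'}$, where $x'$ is a point of $B$ just on the far side of $x$ --- to the left of $x$ if $\xi$ is ``upward'' and to the right if $\xi$ is ``downward'' --- with the restriction map being the generization map of $F$; hence $\mu_{x,\xi}F\simeq\Cone(F_x\to F_{x'})$. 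From this, $\SS(F)\subset\Lambda$ is equivalent to saying that $F$ is locally constant off $\{x_1,\dots,x_k\}$ and that at each $x_i$ the generization map $F_{x_i}\to F_{x_i^-}$ (resp. $F_{x_i}\to F_{x_i^+}$) is a quasi-isomorphism unless $x_i$ carries an upward (resp. downward) spoke. Comparing this list of conditions with the boundary-inclusion rules defining $P_\Lambda$, one gets that $\Sh(X,\Lambda)$ is precisely the dg category of complexes of sheaves that are constructible with respect to $P_\Lambda$ and locally constant on each stratum. Getting the half-open endpoint conventions to line up here is the first bit of bookkeeping.

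Next I would identify that category with $\Rep(Q_\Lambda)$. A complex constructible with respect to $P_\Lambda$ is the same datum as a functor from the exit-path category of $(X,P_\Lambda)$ to complexes of $\bC$-modules: to $F$ one attaches the stalks $(F_{p_\sigma})_{\sigma\in P_\Lambda}$ at chosen basepoints $p_\sigma\in\sigma$ together with the generization maps $F_{p_\sigma}\to F_{p_\tau}$ whenever $\sigma$ lies in the closure of $\tau$. Since $X$ is one-dimensional and the strata in $P_\Lambda$ are linearly ordered along $X$ with consecutive ones adjacent, between any two strata there is at most one chain of generizations, so no relations are imposed and this exit-path category is exactly the path category of the quiver $Q_\Lambda$ --- this is where the description of $Q_\Lambda$ as a type $A_{n+1}$ quiver with edges the spokes is used. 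The stalk functor therefore gives a dg functor $\Sh(X,\Lambda)\to\Rep(Q_\Lambda)$, which I would check is a quasi-equivalence: essential surjectivity by reconstructing, from a representation, the sheaf obtained by placing the prescribed constant complexes on the closed strata and extending across the open ones by the appropriate $j_!$ or $j_*$, with the generization maps as gluing data; and full faithfulness, at the dg level, most cleanly by induction on the number of spokes, where each step is an open-closed gluing (recollement) on the sheaf side matching the attachment of one vertex at an end of the $A$-type quiver, the base case $\Lambda=X$ giving $\Sh(X,X)\cong\bC\text{-mod}=\Rep(A_1)$.

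Finally, for the statement about $\mu_{x,\xi}$: if $(x,\xi)$ lies on a spoke corresponding to an edge $f$ of $Q_\Lambda$, then by the computation above $\mu_{x,\xi}F\simeq\Cone(F_x\to F_{x'})$ with $x'$ on the appropriate side, while by construction of the equivalence the vector spaces $F_x$ and $F_{x'}$ are exactly the ones sitting at the two ends of $f$ and $F_x\to F_{x'}$ is exactly the map that the representation assigns to $f$ --- an upward spoke pointing left and a downward spoke pointing right, which is consistent with the roles of $A$ and $B$. Hence under the equivalence $\mu_{x,\xi}$ is carried to $\Cone(f)$, with no intervening shift. I expect the main obstacle to be the middle step: promoting the soft assertion that constructible sheaves on a $1$-manifold are quiver representations into an honest dg quasi-equivalence compatible with the stratification conventions of the first step; once that dictionary is set up, the microlocal intertwining is a short computation straight from the definition of $\mu_{x,f}$.
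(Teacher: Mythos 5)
Your proposal is essentially the argument the paper relies on: the paper itself offers no proof of Theorem \ref{thm:quiverquiver} beyond the citation to \cite{STZ}, and the proof there proceeds exactly as you describe, by translating $\SS(F)\subset\Lambda$ into $P_\Lambda$-constructibility, encoding a constructible complex by its stalks on the strata together with the generization maps (which, for a $1$-manifold, is precisely a representation of the $A$-type quiver $Q_\Lambda$ with no relations), and computing $\mu_{x,\xi}F \cong \Cone(F_x \to F_{x'})$ directly from the definition. Your bookkeeping of which side $B$ lies on, and hence the match between upward/downward spokes, left/right-pointing arrows, and the boundary-inclusion rules for $P_\Lambda$, is consistent with the conventions in the statement, so no correction is needed.
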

\begin{proof}
See \cite{STZ}.
\end{proof}
\begin{example}
\begin{enumerate}
\item Let $\raise3pt\hbox{$\bot$}\!\!\raise0.36pt\hbox{+}\!\!\hbox{\lower4.5pt\hbox{$\top$}}\subset T^*\bR$
be the union of the zero section, the fiber at $0,$ an upward spoke at some $x_- <0$ and a downward spoke at some $x_+>0.$  Then
$$
\Sh(\bR,\raise3pt\hbox{$\bot$}\!\!\raise0.36pt\hbox{+}\!\!\hbox{\lower4.5pt\hbox{$\top$}})\cong Rep( \bullet \leftarrow \bullet \leftarrow \bullet \rightarrow \bullet \rightarrow \bullet).
$$
\item Let $\Lambda = S^1 \cup T^*_{x_{0}}S^1 \hookrightarrow T^*S^1$ be the union of the zero section, and the cotangent fiber at some $x_0 \in S^1.$ Then
$$
\Sh(S^1, \Lambda) \cong Rep(\bullet \rightrightarrows \bullet).
$$
\end{enumerate}
\end{example}

We conclude this section, by showing how Theorem \ref{thm:quiverquiver} yields a very explicit description of the sheaf $\cpm(-)$. For concreteness, we focus on the example $X=S^1$ and $\Lambda = S^1 \cup T^*_{x_0}S^1$, the general case is similar. Denote $R^+$ and $R^-$ respectively the upward and downward spoke of $\Lambda$. We shall describe the sections of $\cpm(-)$ on \emph{contractible} open subsets $U \subset \Lambda$, and the assignment defining, on objects, the restriction functors 
$$
Res_U: \cpm(\Lambda) = Sh(X, \Lambda) \cong Rep(\bullet \rightrightarrows \bullet) \rightarrow \cpm(U).
$$ 
The definition on morphisms will be obvious. This is sufficient to reconstruct $\cpm(-)$. 
Let $M = (V_1 \underset{g}{\overset{f}{\rightrightarrows}}  V_2)$ be an object in $Rep(\bullet \rightrightarrows \bullet)$. Then,
\begin{itemize}
\item if $U \subset S^1$, $\cpm(U) \cong \bC\text{-mod}$, and $Res_U(M) = V_2$,
\item if $U \subset R^+$, $\cpm(U) \cong \bC\text{-mod}$, and $Res_U(M) = {\mathsf Cone}(f)$,
\item if $U \subset R^-$, $\cpm(U) \cong \bC\text{-mod}$, and $Res_U(M) = {\mathsf Cone}(g)$,
\item if $x_0 \in U$, $\cpm(U) \cong Rep(\bullet \leftarrow \bullet \rightarrow \bullet)$, and $Res_U(M)= (V_2 \stackrel{f}{\leftarrow} V_1 \stackrel{g}{\rightarrow} V_2$).
\end{itemize}

\subsection{Chordal ribbon graphs and $\cpm$}
\label{sec:chord}
Recall that a cyclic order $\cR$ on a set $S$ is a \emph{ternary} relation on $S$, which allows us to speak unambiguously about ordered triples, and satisifies the obvious properties enjoyed by a set of points arranged on an oriented circle. Thus, in particular, if $s \in S$, we can talk about the `successor' of $s$ with respect to $\cR$, which we denote $R(s)$. We call a pair of the form $(s, R(s)) \in S \times S$, a \emph{minimal pair}. 

We define a graph to be a pair $(D, V_D)$ where $D$ is a locally compact topological space, $V_X \subset X$ is a finite closed subset, and the open set $X - V_X$ is homeomorphic to a finite disjoint union of open intervals.  

\begin{definition}
Let $(D,V_D)$ be a graph in which every vertex has degree $\geq 2$.  A \emph{ribbon structure} on $(D,V_D)$ is a collection $\{\cR_v\}_{v \in V_D}$ where $\cR_v$ is a cyclic order on the set of half-edges incident with $v$.  We call a graph equipped with a ribbon structure a ribbon graph.
\end{definition}

\begin{definition}
\label{def:chordal}
A \emph{chordal ribbon graph} is a pair $(X,Z)$, where
\begin{itemize}
\item $X$ is a ribbon graph with vertices of valency at most $4$,
\item $Z$ is a closed subgraph, with vertices of valency at most $2$, containing each vertex of $X$.
\end{itemize}
Also, we require that if $v \in V_X$ is a $4$-valent vertex, then $v$ has valency $2$ in $Z$, and the two half-edges belonging to $Z$ do not form a minimal pair in the cyclic order $\cR_v$.
We refer to $Z$ as the \emph{zero section} of the chordal ribbon graph.
\end{definition}

Let $\Chord$ denote the category whose objects are chordal ribbon graphs, and where $\Hom((C,W),(D,Z))$ is given by the set of open immersions $j:C \hookrightarrow D$ which preserve the cyclic orders at each vertex, and such that $j(W) \subset Z$. We endow $\Chord$ with a Grothendieck topology in the evident way.

The simplest examples of chordal ribbon graphs, called \emph{fishbones}, are pairs of the form $(\Lambda, X \cap \Lambda)$, where $X$ is a $1$-dimensional manifold, and $\Lambda \subset T^*X$ is a closed conical Lagrangian subset. In fact, $\Lambda$ is equipped with a canonical ribbon graph structure, while the subgraph $X \cap \Lambda$ clearly has all the properties of a \emph{zero section}, in the sense specified by Definition \ref{def:chordal}. Section \ref{sec:mic} gives a recipe for constructing a sheaf $\cpm(-)$ on any fishbone $(\Lambda, X \cap \Lambda)$ (see Definition \ref{def:cpm1}). As the full subcategory of fishbones is a basis for the Grothendieck topology on $\Chord$,\footnote{In fact, making this assertion precise requires defining morphisms in $\Chord$ in a more careful manner than we did above, we refer the reader to \cite{STZ} for further details.} we can make the following definition.

\begin{definition}
Denote $\cpm:\Chord \to \dgCat$ the sheaf of dg categories on $\Chord$ whose restriction to the sub-category of fishbones recovers Definition \ref{def:cpm1}. We call $\CPM(D,Z)$ the \emph{constructible plumbing model} of the chordal
ribbon graph $(D,Z)$.
\end{definition}

Chordal structure and restriction on valency are just convenient technical assumptions which could be removed as $\cpm(-)$ is expected not to depend on them. More precisely, up to quasi-equivalence, the constructible plumbing model of the chordal ribbon graph $(D, Z)$ should be a function solely of the `deformation class,' appropriately defined, of the ribbon graph $D$.\footnote{\label{foot:def} In fact, any ribbon graph is deformation equivalent, in this sense, to a ribbon graph admitting chordal structure. A more satisfactory definition of $\cpm$, which will take as input appropriately graded ribbon graphs of abitrary valency, is currently work in progress.}

Setting technical complications aside, let us assume for the moment that $\cpm(-)$ can be evaluated on a general ribbon graph. Then the expected relationship with the Fukaya category can be formulated as in Conjecture \ref{conj} below.
Recall that ribbon graphs label cells in the moduli space of punctured Riemann surface (see e.g. \cite{H} and \cite{P}). Further, if $\Sigma$ lies in the cell labeled by $\Gamma_{\Sigma}$, there is an embedding $\Gamma_{\Sigma} \hookrightarrow \Sigma$, and a nicely behaved retraction of $\Sigma$ onto $\Gamma_{\Sigma}$. In the language of Stein geometry, $\Gamma_{\Sigma}$ is the \emph{skeleton} of $\Sigma$.
\begin{conjecture}
\label{conj}
Let $\Sigma$ be a punctured Riemann surface with skeleton $\Gamma_{\Sigma}$, then $\cpm(\Gamma_{\Sigma})$ is quasi-equivalent to $Fuk(\Sigma)$.\footnote{Here and elsewhere in this paper, when referring to the Fukaya category, we actually mean the \emph{split closure} of its category of \emph{twisted complexes}, see \cite{S}.}
\end{conjecture}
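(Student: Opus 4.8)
The plan is to realize $Fuk(\Sigma)$ as the global sections of a sheaf of dg categories on the skeleton $\Gamma_\Sigma$ assembled from \emph{local} Fukaya categories, and then to identify that sheaf with $\cpm(-)$. First I would fix an open cover $\{U_\alpha\}$ of $\Gamma_\Sigma$ by contractible neighborhoods of its vertices together with small tubular neighborhoods of the open edges, refined enough that each $U_\alpha$, and each pairwise and triple overlap, is of \emph{fishbone} type: the region of $\Sigma$ that retracts onto $U_\alpha$ should be Liouville-deformation equivalent to a neighborhood of a closed conical Lagrangian $\Lambda_\alpha \subset T^*X_\alpha$ inside the cotangent bundle of a $1$-manifold $X_\alpha$ (an interval, carrying stops that record the nearby punctures). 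The chordal structure and valency bound needed to even write down $\cpm(\Gamma_\Sigma)$ can be arranged by a small deformation of the graph, using the deformation-invariance expected of $\cpm$ (Footnote \ref{foot:def}); granting this, $\cpm$ is \emph{by construction} the sheafification of the fishbone assignment, so it is enough to match the two theories on the fishbone basis and on overlaps.

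The local input is the microlocalization theorem of Nadler and Zaslow in the form adapted to open targets: for a $1$-manifold $X_\alpha$ and conical Lagrangian $\Lambda_\alpha \subset T^*X_\alpha$, the partially wrapped Fukaya category of the corresponding model region, with stops along the spokes of $\Lambda_\alpha$ pointing toward the punctures, is quasi-equivalent to $\Sh(X_\alpha,\Lambda_\alpha)$, hence by Theorem \ref{thm:quiverquiver} to $\Rep(Q_{\Lambda_\alpha})$ and to $\cpm(\Lambda_\alpha)$. On a one-dimensional overlap --- a spoke $R^{\pm}$ or an open subinterval of the zero section --- both sides collapse to $\Cmod$, and the content is that the Fukaya-theoretic restriction functors reproduce exactly the ones recorded at the end of Section \ref{sec:quiv}: restriction to the zero section is ``take the stalk'', while restriction to a spoke is the functor $\Cone(f)$ (resp. $\Cone(g)$), which on the symplectic side is the geometric passage to the microlocal stalk $\mu_{x,\xi}$ along that spoke --- precisely the intertwining asserted in the second clause of Theorem \ref{thm:quiverquiver}. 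Making these identifications natural in $U_\alpha$ yields an equivalence between $\cpm(-)$ and the sheaf of local Fukaya categories on $\Gamma_\Sigma$; since Lemma \ref{lem:ho eq} makes the relevant homotopy equalizers explicit, one then passes to global sections to get $\cpm(\Gamma_\Sigma)$ quasi-equivalent to the global sections of the local Fukaya sheaf.

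The crux --- and the reason this is still a conjecture --- is the remaining \emph{descent} statement: that $Fuk(\Sigma)$ really is recovered as the homotopy limit over $\Gamma_\Sigma$ of the local Fukaya categories, with the correct sheaf (as opposed to cosheaf) variance. This is the surface instance of Kontsevich's local-to-global principle for Fukaya categories of Stein manifolds, and a proof seems to require (i) a gluing formalism for partially wrapped Fukaya categories along hypersurfaces in the skeleton, letting one cut Lagrangians and their Floer differentials into local pieces and reglue them, and (ii) reconciling the fact that the \emph{compact} Fukaya category naturally forms a sheaf while wrapped invariants form a cosheaf --- an internal Koszul/Verdier-type duality on $\Gamma_\Sigma$. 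In addition one should expect to need a careful bookkeeping of gradings and orientations (the footnote to the statement already passes to the split closure of twisted complexes) and, to cover an \emph{arbitrary} $\Sigma$, the extension of $\cpm$ to graded ribbon graphs of unbounded valency promised in Footnote \ref{foot:def}. As a fallback sufficient for the mirror-symmetry applications of Section \ref{sec:hms}, one can sidestep general descent and instead verify the equivalence by hand for the punctured tori $\hat C$ --- computing $\cpm(\Gamma_{\hat C})$ from the quiver-and-gluing description and $Fuk(\hat C)$ from the combinatorial count of immersed polygons available in real dimension two --- and then propagate it across all punctured surfaces of fixed genus and puncture number via deformation invariance.
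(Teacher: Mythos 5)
The paper contains no proof of this statement: it is Conjecture \ref{conj}, stated without argument and invoked only conditionally afterwards (``Granting Conjecture \ref{conj}\dots'') to promote Theorem \ref{thm:hms} into a genuine mirror-symmetry statement. So there is nothing in the paper to compare your argument against; the only question is whether your proposal actually closes the conjecture, and it does not --- as you yourself say, the descent statement that $Fuk(\Sigma)$ is recovered as the homotopy limit over $\Gamma_\Sigma$ of local Fukaya categories is ``the crux and the reason this is still a conjecture.'' That step is not a technical loose end but the entire mathematical content. Everything preceding it --- covering the skeleton by fishbones, identifying local sections with $\Sh(X_\alpha,\Lambda_\alpha)$ via Theorem \ref{thm:quiverquiver}, computing homotopy equalizers with Lemma \ref{lem:ho eq} --- only unwinds the \emph{definition} of $\cpm(\Gamma_\Sigma)$ and says nothing yet about the Fukaya category of $\Sigma$.

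Two further gaps are worth naming. First, the local input you cite is stronger than what Nadler--Zaslow actually prove: their theorem concerns exact Lagrangians in a cotangent bundle $T^*X$ and the full category $\Sh(X)$, whereas you need a version for a stopped or partially wrapped category attached to a neighborhood of a conical Lagrangian $\Lambda_\alpha$, together with the compatibility of the Fukaya-theoretic restriction functors with the microlocal stalks $\mu_{x,\xi}$; establishing this local statement and its functoriality in the cover is itself part of the open problem, not a citation. Second, the proposed fallback --- verifying the equivalence by hand for punctured tori and propagating by deformation invariance --- would at best prove special cases relevant to Section \ref{sec:hms}: deformation invariance of $\cpm$ is itself only ``expected'' in the paper (cf.\ Footnote \ref{foot:def}), and the direct computation of $Fuk(\hat C)$ by counting immersed polygons is asserted rather than carried out. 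In short, your outline is a faithful description of the intended strategy (Kontsevich's local-to-global principle with Nadler--Zaslow as the local model), and it correctly locates where the difficulty sits, but it is a research program rather than a proof --- which is exactly how the paper treats the statement.
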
 

\section{Homological Mirror Symmetry for tcnc curves}
\label{sec:hms}
In this section we will prove the main theorem of \cite{STZ}, which establishes a version of homological mirror symmetry for a class of nodal, stacky, curves of genus $g \leq 1$, introduced in Section \ref{sec:tcnc} below. The proof of HMS will be discussed in Section \ref{sec:proof}, and will make use of the model for the Fukaya category supplied by the sheaf $\cpm(-)$.
\subsection{tcnc curves}
\label{sec:tcnc}
Let $\bP^1(a_1,a_2)$ be a projective line, with stacky points at $0$, and $\infty$, and isotropy groups isomorphic, respectively, to $\bZ_{a_1}$, and $\bZ_{a_2}$.\footnote{Note that our conventions differ from the ones commonly found in the literature. Weighted projective lines, which are denoted $\bP^1(a_1,a_2)$, are ususally defined as quotients of $\bC^2 - \{0\}$ by $\bC^*$ acting with weights $a_1, a_2$. According to the latter definition, if $gcd(a_1,a_2) \neq 1$, $\bP^1(a_1,a_2)$ has non-trivial generic isotropy group. However, the two definitions agree if $gcd(a_1,a_2) = 1$.} We call $\bP^1(a_1,a_2)$ a \emph{Beilinson-Bondal} (or, BB) curve. 

\begin{definition} 
\label{def:tcnc}
A \emph{tcnc curve} $C$ is a connected, reduced DM stack of dimension $1$, with nodal singularities, such that its normalization $\tilde{C} \stackrel{\pi}{\rightarrow} C$ is a disjoint union of $n$ BB curves $P_1, \dots, P_n$. Further, if $Z \hookrightarrow C$ is the singular set, we require that $\pi^{-1}(Z)$ interesects each $P_i$ in at most two points.
\end{definition}

It follows from the definition, that the coarse moduli space of a tcnc curve must have arithmetic genus $g \leq 1$, and thus be equal to a cycle of rational curves (i.e., a Galois cover of a nodal $\bP^1$), if $g=1$, and to a chain of rational curves if $g=0$. 

A tcnc curve $C$ is uniquely determined by its genus, together with a tuple of positive integers, which we shall call the $W$-vector, and which specifies the orders of the isotropy groups at points $0$ and $\infty$, on the different irreducible components of $C$. We will not give a formal definition of the $W$-vector, as it easier to see how this works in an example.

\begin{example}
\label{ex:tcnc}
Consider the weighted projective plane $\bP^2(1,2,3) = [(\bC^3 - \{0\})/\bC^*]$, where $\bC^*$ acts with weights $1,2,3$.  
\begin{itemize}
\item Let $C \hookrightarrow \bP^2(1,2,3)$ be the sub-stack defined by the equation $x_0x_1=0$. $C$ is a tcnc curve of genus $0$, and can be encoded in the $W$-vector $(1,2,3) \in \bN^3$. Note that the reverse tuple $(3,2,1)$ is an equally valid $W$-vector for $C$.
\item Let $C' \hookrightarrow \bP^2(1,2,3)$ be defined by $x_0x_1x_2 = 0$. $C'$ has genus $1$, and is also described by the $W$-vector $(1,2,3) \in \bN^3$. As before, because of the evident symmetries of $C'$, there are other viable choices of $W$-vector for $C'$, such as for instance $(2,3,1)$.
\end{itemize} 
\end{example}

\begin{figure}
\includegraphics[height=2in]{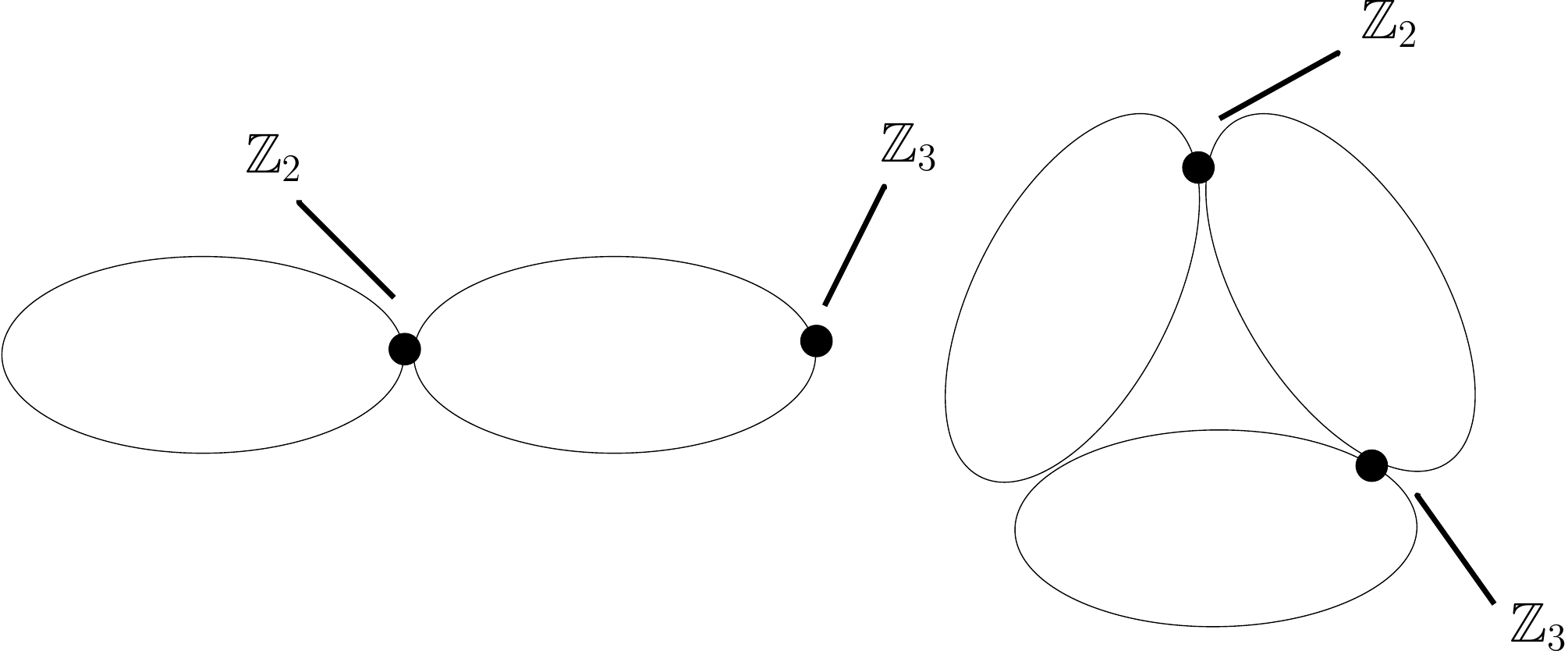}
\caption{Above is a picture of the tcnc curves considered in Example \ref{ex:tcnc}. The labels indicate the isotropy subgroups at the stacky points.}
\end{figure}

\begin{definition}
Denote $C_A^i$ the tcnc curve of genus $i \in \{0,1\}$, with $W$-vector $A \in \bN_{>0}^m$. 
\end{definition}
Theorem \ref{thm:perf} gives a description of the category of perfect complexes over a tcnc curve which will play a key role in our proof of homological mirror symmetry.
\begin{theorem}
\label{thm:perf}
Let $C$ be a tcnc curve with singular set $Z$,\footnote{Note that $Z$ is a disjoint union of classifying stacks of the form $[Spec(\bC)/\mu_{a_i}]$.} and normalization $\pi:\tilde{C} \rightarrow C$. Let $\sigma,\tau$ be two non overlapping sections of $\pi^{-1}(Z) \rightarrow Z$, then the diagram 
$$
\xymatrix{
\ \Perf(C) \ar[r]^{\pi^{*}}   & \Perf(\tilde{C}) \ar@<1ex>[r]^{\sigma^*} \ar@<-0.5ex>[r]_{\tau^*}& \Perf(Z)}
$$
is an equalizer in $\dgCat$.  
\end{theorem}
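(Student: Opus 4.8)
The plan is to realize $C$ as a homotopy pushout (gluing) of its normalization along the nodes, and then dualize to express $\Perf(C)$ as the corresponding homotopy pullback of dg categories. Concretely, a tcnc curve $C$ is obtained from $\tilde C$ by identifying, for each node, the two points in $\pi^{-1}(Z)$ lying over it; in stacky language, $C$ is the pushout $\tilde C \coprod_{Z \sqcup Z} Z$ where the two maps $Z \sqcup Z \to \tilde C$ are the sections $\sigma$ and $\tau$ and the map $Z \sqcup Z \to Z$ is the fold map. Since nodal gluing along closed substacks is a categorical pushout of (affine-over-the-base) schemes/stacks, one expects — and this is the crux — a descent statement: $\Perf(-)$ sends this pushout of stacks to a pullback of dg categories, i.e.
$$
\Perf(C) \simeq \Perf(\tilde C) \times_{\Perf(Z)\times\Perf(Z)} \Perf(Z),
$$
which unravels to exactly the stated equalizer with the two arrows $\sigma^*$ and $\tau^*$.

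First I would set up the comparison functor. A perfect complex on $C$ pulls back to one on $\tilde C$ together with, via restriction to $Z$ along $\sigma$ and along $\tau$, two objects of $\Perf(Z)$ that are identified (the gluing datum on $C$ forces $\sigma^*\pi^* = \tau^*\pi^*$ canonically), so there is a natural functor from $\Perf(C)$ to the homotopy equalizer $\cE$ of Lemma \ref{lem:ho eq}. Conversely, from a datum $(F \in \Perf(\tilde C), u: \sigma^*F \xrightarrow{\sim} \tau^*F)$ one builds a complex on $C$ by descent: locally near a node, $C = \Spec(R)$ with $R = \{(a,b) \in A_1 \times A_2 : a|_Z = b|_Z\}$ the fiber product of the two branches, and a module over $R$ is exactly a pair of modules over $A_1, A_2$ with an identification of their restrictions to $Z$ — this is Milnor patching / the classical conductor-square description of modules on a pinch point. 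I would carry this out étale-locally (or Zariski-locally, since the nodes are separated and the statement is local on $C$ away from which $\pi$ is an isomorphism), check that perfectness is preserved (using that $Z \hookrightarrow \tilde C$ is a regular — in fact divisorial — embedding, so the conductor square behaves well), and then globalize.

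The key steps, in order: (1) identify $C$ as the pushout of stacks along the node, and reduce the theorem to a local statement on affine pieces $\Spec R$ with $R$ the branch fiber product; (2) prove the affine case — modules (resp. perfect complexes) over the fiber-product ring form the pullback category, i.e. the derived Milnor square $R \to A_1 \times A_2 \to \mathcal{O}(Z)$ is both a pushout of rings and (after applying $\Perf$) a pullback of categories, being careful with the homotopy-coherence so that the "two restrictions agree up to coherent homotopy" matches the objects-plus-isomorphism description in Lemma \ref{lem:ho eq}; (3) check compatibility with the stacky structure at $0,\infty$ — here one just notes $\Perf([\Spec\bC/\mu_a]) = \Rep(\mu_a)$ and the gluing happens at smooth points of $C$ away from those, so the isotropy plays no role in the gluing and the local analysis is unchanged; (4) reassemble via descent for $\Perf(-)$ along the Zariski/étale cover, using that $\dgCat$ with Tabuada's model structure has the homotopy limits we need and that $\Perf(-)$ is an étale (even fppf) sheaf of dg categories. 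The main obstacle I anticipate is step (2) done homotopy-coherently: the naive Milnor-patching statement is classical and easy for modules, but upgrading it to an equivalence of dg categories — making sure the homotopy equalizer $\cE$ (with its explicit closed-degree-zero morphism $u$ that becomes invertible in the homotopy category) really is equivalent to $\Perf(C)$, rather than to some larger category of "derived gluing data" — requires knowing that the conductor square stays a derived pullback, i.e. that $\Tor$ between the branches over $\mathcal{O}(Z)$ vanishes in positive degrees. For nodal curves this holds because near a node $A_1 \otimes^{\bL}_{\mathcal{O}(Z)} A_2$ is concentrated in degree $0$ (the two branches meet transversally, $Z$ is a Cartier divisor in each), so the derived and underived fiber products agree; establishing and invoking this cleanly is where the real work lies, and once it is in hand the identification with Lemma \ref{lem:ho eq}'s $\cE$ is formal.
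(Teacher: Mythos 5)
Your proposal is correct and is essentially the argument the paper relies on: the paper's proof of Theorem \ref{thm:perf} simply defers to Section 4 of \cite{L} and to \cite{STZ}, which establish exactly this descent-along-the-normalization statement by reducing to the affine conductor (Milnor) square at each node and patching perfect complexes, just as you outline, with the stacky points playing no role since the nodes are away from them. One small imprecision worth fixing: the relevant derived input is not vanishing of higher Tor in $A_1\otimes^{\bL}_{\mathcal{O}(Z)}A_2$ (a tensor product over a field, hence automatically concentrated in degree $0$ and not the right condition), but rather that the conductor square is a Milnor square with $\mathcal{O}(\tilde C)\to\mathcal{O}(Z)\times\mathcal{O}(Z)$ surjective together with exactness of $0\to\mathcal{O}_C\to\pi_*\mathcal{O}_{\tilde C}\to i_*\mathcal{O}_Z\to 0$, which combined with the projection formula for a perfect complex yields full faithfulness, while essential surjectivity is the classical Milnor patching step.
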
 
\begin{proof}
The statement, for nodal curves of arbitrary genus, is essentially proved in Section 4 of \cite{L}. The small adjustments required to handle nodal DM stacks of dimension $1$, and thus extend the argument to tcnc curves, are worked out in \cite{STZ}. 
\end{proof}

\subsection{Wheels, dualizable ribbon graphs, and HMS}
\label{sec:proof}
A \emph{wheel} is a conical Lagrangian $\Lambda$ in $T^* S^1$ that contains the zero section. We can equip a wheel with canonical chordal structure, given by the pair $(\Lambda, Z=S^1)$. Recall from Section \ref{sec:chord} that a choice of orientation on $S^1$ yields a subdivision of the spokes of $\Lambda$ into two groups, called respectively ``upward'' and ``downward.'' We will denote $\Lambda_{a_1,a_2}$ a wheel with $a_1$ upward spokes, and $a_2$ downward spokes. 

\begin{theorem}
\label{thm:wheels}
If $a_1, a_2 \in \bN_{>0}$, there is an equivalence $\Perf(\bP^1(a_1,a_2)) \cong Sh(S^1, \Lambda_{a_1,a_2})$.
\end{theorem}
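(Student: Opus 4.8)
The plan is to identify both sides explicitly with a category of quiver representations and check that the identifications match. For the left-hand side, I would use Theorem \ref{thm:perf} with the (stacky) curve $\bP^1(a_1,a_2)$ itself playing the role of $C$: here the normalization $\tilde C$ is a disjoint union of affine-type charts around $0$ and $\infty$, and one applies the equalizer description together with a Beilinson-type resolution to present $\Perf(\bP^1(a_1,a_2))$ as modules over an explicit finite-dimensional algebra. Concretely, the exceptional collection $\cO, \cO(1), \dots$ on a BB curve (Beilinson--Bondal) presents $\Perf(\bP^1(a_1,a_2))$ as representations of a quiver: two ``central'' vertices joined by $a_1 + a_2$ arrows in a prescribed cyclic pattern, with the stacky structure at $0$ and $\infty$ contributing the multiplicities $a_1$ and $a_2$ and a specified relation. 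For the right-hand side, Theorem \ref{thm:quiverquiver} (together with its $S^1$ incarnation, as in the second part of the Example following it) identifies $\Sh(S^1,\Lambda_{a_1,a_2})$ with $\Rep(Q_{\Lambda_{a_1,a_2}})$, where $Q_{\Lambda_{a_1,a_2}}$ is the affine cyclic quiver obtained by reading off the spokes: the $a_1$ upward spokes give $a_1$ arrows pointing one way and the $a_2$ downward spokes give $a_2$ arrows pointing the other way, around the circle.

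The key steps, in order, are: (1) work out $Q_{\Lambda_{a_1,a_2}}$ from the partition $P_{\Lambda}$ construction of Section \ref{sec:quiv} specialized to $X = S^1$, checking carefully the boundary-condition rules so the vertices and arrows are counted correctly; (2) write down the quiver-with-relations presentation of $\Perf(\bP^1(a_1,a_2))$ via its tilting object, using Theorem \ref{thm:perf} to reduce the stacky gluing to the honest (non-stacky) affine pieces; (3) produce an explicit isomorphism of the two quivers-with-relations, matching arrows to spokes (upward spoke $\leftrightarrow$ left-pointing arrow, downward spoke $\leftrightarrow$ right-pointing arrow, as in Theorem \ref{thm:quiverquiver}); and (4) upgrade this to a quasi-equivalence of dg categories, which is automatic once the underlying algebras are identified since both categories are modules over these algebras. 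A sanity check worth recording: for $(a_1,a_2) = (1,1)$ this recovers Burban--Kreussler's description for the nodal $\bP^1$ \cite{BK1}, and the combinatorics should visibly reduce to the wheel $\Lambda_{1,1}$.

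The main obstacle I expect is step (2): getting the relations on the $\Perf$ side exactly right in the stacky case. The equivalence of Theorem \ref{thm:quiverquiver} is stated for \emph{representations} of $Q_\Lambda$ with no relations, so either the correct statement on the $\Perf$ side must also be relation-free (which happens precisely because $\bP^1(a_1,a_2)$ has global dimension one, so the path algebra of the $A$-type / affine-type quiver is already the right answer), or the relations must be shown to be an artifact of a non-minimal presentation. I would resolve this by checking that the tilting bundle on $\bP^1(a_1,a_2)$ has endomorphism algebra of global dimension one, so that $\Perf(\bP^1(a_1,a_2)) \cong \Rep(Q)$ with $Q$ the relevant cyclic quiver and no relations, and then the identification with $\Sh(S^1,\Lambda_{a_1,a_2})$ is purely a matter of matching $Q$ with $Q_{\Lambda_{a_1,a_2}}$. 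A secondary subtlety is orientation bookkeeping — the choice of orientation on $S^1$ and the labeling of which weight is $a_1$ versus $a_2$ — but by the symmetry already noted (the reverse $W$-vector is equally valid), either matching works.
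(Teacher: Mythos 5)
Your route is genuinely different from the paper's: the paper does not prove Theorem \ref{thm:wheels} from scratch, but attributes it to Bondal \cite{B} and to the coherent--constructible correspondence of \cite{FLTZ1,FLTZ2}, of which it is a special case. Your plan --- present both sides as representations of the same quiver, via a tilting object on the $\Perf$ side and Theorem \ref{thm:quiverquiver} on the sheaf side --- is essentially how one would prove Bondal's result directly for two weights, and its skeleton is sound: the endomorphism algebra of the standard tilting bundle on $\bP^1(a_1,a_2)$ is the hereditary path algebra of the cyclic quiver $\tilde A_{a_1,a_2}$, which is exactly $Q_{\Lambda_{a_1,a_2}}$ as computed from $P_\Lambda$. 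What the direct route does not automatically give you is the compatibility with the microlocal stalk functors recorded in Remark \ref{rem:stalk}, which is what the paper actually needs downstream in Theorem \ref{thm:hms} and which comes packaged with the FLTZ functor.

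Three concrete repairs are needed. First, Theorem \ref{thm:perf} is of no use here: it is a descent statement along the normalization of a \emph{nodal} curve, and $\bP^1(a_1,a_2)$ is smooth as a stack, so its singular set $Z$ is empty and the equalizer diagram degenerates to $\Perf(C)\cong\Perf(\tilde C)$; the normalization is not a union of affine charts, and there is no ``stacky gluing'' to perform. (In the paper's architecture the logic runs the other way: Theorem \ref{thm:wheels} supplies the local equivalences and Theorem \ref{thm:perf} glues them into Theorem \ref{thm:hms}.) Second, your quiver on the $\Perf$ side --- two vertices joined by $a_1+a_2$ arrows --- is a generalized Kronecker quiver and does not match the cyclic quiver with $a_1+a_2$ vertices that you correctly extract on the sheaf side, except when $a_1=a_2=1$; the correct tilting algebra is $k\tilde A_{a_1,a_2}$. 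Relatedly, ``no relations because the category of sheaves is hereditary'' is not a valid inference (for three or more stacky points the category is still hereditary but the canonical algebra has relations), so your fallback of computing $\mathrm{End}$ of the tilting bundle directly is the step that must actually be carried out; for two weights it is indeed relation-free. Third, the $(1,1)$ sanity check recovers Beilinson \cite{Be} for the smooth $\bP^1$, not Burban--Kreussler \cite{BK1}, which concerns the nodal cubic --- a tcnc curve, not a BB curve.
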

\begin{proof} 
Theorem \ref{thm:wheels} is due to Bondal \cite{B}, who first suggested this should be interpreted as an instance of mirror symmetry. Partially inspired by Bondal's insights, Fang, Liu, Treumann and Zalow develop an approach to HMS for (stacky) toric varieties \cite{FLTZ1, FLTZ2}, which in particular implies this result, and is the starting point for the project pursued in \cite{STZ}. Note that
when $a_1=a_2=1$, this recovers the classic result of Beilinson \cite{Be}, according to which there is an equivalence 
$
D^b(Coh(\bP^1)) \cong Rep(\bullet \rightrightarrows \bullet).
$
In fact, Theorem \ref{thm:quiverquiver} gives an equivalence $Sh(S^1, \Lambda_{1,1}) \cong Rep(\bullet \rightrightarrows \bullet)$. 
\end{proof}

\begin{remark}
\label{rem:stalk}
Theorem \ref{thm:wheels} can be refined, by requiring that the equivalence intertwine appropriate `stalk functors.'
Let $i \in \{1,2\}$, and consider the inclusion $j_i: [*/\bZ_{a_i}] \rightarrow \bP^1(a_1,a_2)$. If $\chi$ is a character of $\bZ_{a_i}$, we denote $S^i_{\chi}$ the following composition
$$
S^i_{\chi}: \Perf(\bP^1(a_1,a_2))  \stackrel{j_i^* }{\longrightarrow} \Perf([*/\bZ_{a_i}]) \stackrel{{\chi} }{\longrightarrow} \bC\text{-mod}.
$$
Choose a labelling of the downward spokes of $\Lambda_{a_1,a_2}$ by characters of $\bZ_{a_1}$, and of the upward spokes, by characters of $\bZ_{a_2}$.\footnote{Both the set of characters and the set of up-/down- ward spokes come with natural cyclic orders (the spokes inherit it from the ribbon structure on $\Lambda_{a_1,a_2}$). The labelling cannot therefore be arbitrary, as it must preserve this cyclic order, see \cite{STZ}.} Denote $R^i_{\chi}$ the spoke of $\Lambda_{a_1,a_2}$ labeled by the character $\chi$ of $\bZ_{a_i}$. Note that there is a restriction functor
$$ 
Res^i_{\chi}: \cpm(\Lambda_{a_1,a_2}) \longrightarrow \cpm(R^i_{\chi}) \cong \bC\text{-mod}.
$$
The claim is that we can define $\Phi: \Perf(\bP^1(a_1,a_2)) \cong \cpm(\Lambda_{a_1,a_2})$ in such a way that we get commutative diagrams of dg categories
$$
\xymatrix{
 \Perf(\bP^1(a_1,a_2)) \ar[r]^{S^i_{\chi}} \ar[d]_{\Phi} & \bC\text{-mod} \ar[d]^{\cong}\\
 \cpm(\Lambda_{a_1,a_2}) \ar[r]^{Res^i_{\chi}} & \cpm(R^i_{\chi}). 
}
$$ 
\end{remark}

The chordal ribbon graphs which are most relevant in the context of mirror symmetry are of a special kind, called \emph{dualizable}. Dualizable ribbon graphs are obtained by gluing together wheels along matching sets of up- and down-ward spokes. We will limit ourselves to explain the geometry of dualizable ribbon graphs through concrete examples, while referring the reader to \cite{STZ} for rigorous definitions. Also, we will mostly consider \emph{trivalent} dualizable ribbon graphs, as this will somewhat simplify the exposition, and will not reduce generality in any serious way (in fact, any chordal ribbon graph is, in an appropriate sense, `deformation equivalent' to a trivalent graph, cf. Footnote \ref{foot:def}).  

Let $a \in \bN_{>0}$, and denote $R_a$ the chordal ribbon graph given by a disjoint union of positive rays, with empty vertex set, and trivial chordal structure, $R_a = (\coprod_{1 \leq i \leq a}\bR_{>0}, \varnothing)$. If $\Lambda_{a_1, a_2}$ is a wheel, we can choose morphisms in $\Chord$
$$R_{a_2} \stackrel{i^-}{\rightarrow} \Lambda_{a_1, a_2} \stackrel{i^+}{\leftarrow}R_{a_1},$$
mapping homeomorphically the components of $R_{a_1}$, and $R_{a_2}$, respectively onto the upward, and downward, spokes of $\Lambda_{a_1,a_2}$. 
  
\begin{example}
\label{ex:dual}
\begin{enumerate}
\item Let $A = (1,2,3) \in \bN^3$, and denote $\Lambda^0_A = (D^0_A, Z_A)$ the chordal ribbon graph obtained as the push-out of the following diagram in $\Chord$,
$$
\xymatrix{
& R_2 \ar[ld]^{i^+} \ar[rd]_{i^-} \\
\Lambda_{1,2} & & \Lambda_{2,3}
}
$$ 
That is, $D^0_A$ is the push-out of the underlying $1$-dimensional CW-complexes, and is equipped with the unique chordal structure rendering the natural inclusions $$\Lambda_{1,2} \hookrightarrow \Lambda^0_A \hookleftarrow \Lambda_{2,3}$$ morphisms in $\Chord$. Thus, $Z_A$ is the disjoint union of two circles. Note that $D^0_A$ is the non-compact skeleton of a punctured curve of genus $0$, endowed with appropriate Stein structure.
\item Let $A = (1,2,3) \in \bN^3$, and let $\Lambda^1_A = (D^1_A, Z_A)$ be the push-out of the following diagram in $\Chord$
$$
\xymatrix{
& R_2 \ar[ld]^{i^+} \ar[rd]_{i^-} & & R_3 \ar[ld]^{i^+} \ar[rd]_{i^-} & _{i^-} & R_1 \ar[dl]^{i^+} \ar[dlllll] \\
\Lambda_{1,2} & & \Lambda_{2,3} & & \Lambda_{3,1} 
}
$$ 
The ribbon graph $D^1_A$ is isomorphic to the skeleton of a Stein torus with $6$ punctures.
\end{enumerate}
\end{example}  

\begin{figure}
\includegraphics[height=2in]{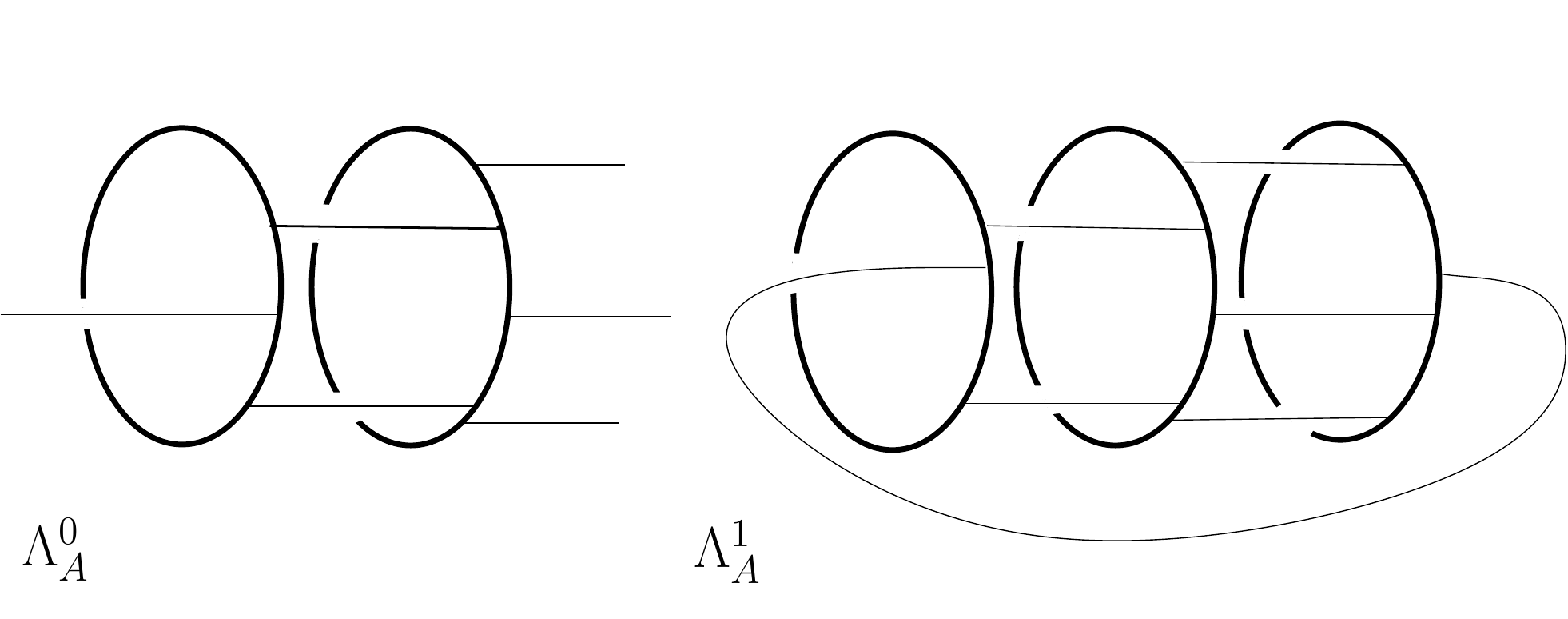}
\caption{The dualizable ribbon graphs considered in Example \ref{ex:dual} (1) and (2) are sketched above. We have signaled the zero section by drawing it with a thicker line.}
\label{fig:dual}
\end{figure}

Dualizable ribbon graphs are constructed by adjoining together wheels as in the two examples above,\footnote{\label{foot:braid} It is important to point out that, as shown in Figure \ref{fig:dual}, in a dualizable ribbon graph the strands joining together the components of the zero section cannot be (non-trivially) `braided.' This can be translated in appropriate conditions of coherency on the maps $R_{a_i} \rightarrow \Lambda_i$. We refer the reader to \cite{STZ} for further details.} and are therefore completely determined up to isomorphism by their genus, which is equal to $0$ or $1$,\footnote{The genus of a ribbon graph $D$ can be described geometrically as the genus of any surface in which $D$ can be embedded, in a way compatible with the ribbon structure, as a deformation retract. Thus $D^0_A$ in example \ref{ex:dual} (1) has genus $0$, while $D^1_A$ in Example \ref{ex:dual} (2), has genus $1$. For a formal, combinatorial definition of the genus of a ribbon graph, see \cite{STZ}.} and by a tuple of positive integers recording the number of edges connecting the different components of the chordal zero section, $Z$. This is entirely analogus to the case of tcnc curves, which was discussed in Section \ref{sec:tcnc}. 

Let $i \in \{0,1\}$, and let $A = (a_1, \dots, a_m)$ be a tuple of positive integers, and denote $\Lambda^i_A$ the dualizable ribbon graph identified, in the manner explained above, by the pair of $i$ and $A$.

\begin{theorem}[HMS]
\label{thm:hms}
There is an equivalence of dg categories
$$
\Perf(C^i_A) \cong \cpm(\Lambda^i_A).
$$
\end{theorem}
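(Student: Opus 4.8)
The plan is to prove Theorem \ref{thm:hms} by exhibiting both sides as homotopy limits of the same diagram built out of the components, and then invoking the two ``local'' equivalences already established: Theorem \ref{thm:perf} on the algebraic side, and the sheaf property of $\cpm(-)$ together with Theorem \ref{thm:wheels} on the symplectic side. Concretely: a tcnc curve $C^i_A$ is glued from its normalization $\tilde C = \coprod_j P_j$, where each $P_j$ is a BB curve $\bP^1(a_j, a_{j+1})$ (indices cyclic if $i=1$, a chain if $i=0$), along the nodal locus $Z$, which is a disjoint union of classifying stacks $[\Spec \bC/\mu_{a_j}]$. Dually, the dualizable ribbon graph $\Lambda^i_A$ is glued from the wheels $\Lambda_{a_j,a_{j+1}}$ along the ray-bundles $R_{a_j}$, which are exactly the gluing data encoded by the push-out diagrams in Example \ref{ex:dual}. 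The strategy is to match these two gluing presentations term by term.

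First I would record that, by Theorem \ref{thm:perf}, $\Perf(C^i_A)$ is the equalizer (hence homotopy limit, by Lemma \ref{lem:ho eq}) of
$$
\Perf(\tilde C) \rightrightarrows \Perf(Z),
$$
with the two maps given by the two non-overlapping sections $\sigma,\tau$ of $\pi^{-1}(Z)\to Z$; unwinding, $\Perf(\tilde C)=\prod_j \Perf(\bP^1(a_j,a_{j+1}))$ and $\Perf(Z)=\prod_j \Perf([*/\mu_{a_j}])$, and the two arrows are the restrictions to the ``$0$'' and ``$\infty$'' stacky points of the adjacent components. Second, I would show the sheaf $\cpm(-)$ applied to $\Lambda^i_A$ computes the corresponding homotopy limit: since $\Lambda^i_A$ is the push-out (along $\Chord$-morphisms) of the wheels $\Lambda_{a_j,a_{j+1}}$ over the ray-graphs $R_{a_j}$, and $\cpm$ is a sheaf on $\Chord$, the descent (Čech) description gives
$$
\cpm(\Lambda^i_A) \simeq \mathrm{holim}\Big[\textstyle\prod_j \cpm(\Lambda_{a_j,a_{j+1}}) \rightrightarrows \prod_j \cpm(R_{a_j})\Big],
$$
the two maps being the restrictions $i^{\pm,*}$ to the up/down spokes. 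Here I use that $\cpm(R_a)\cong \bC\text{-mod}^{\oplus a}$ (each ray is a fishbone with contractible pieces, cf. the explicit $S^1$ computation in Section \ref{sec:mic}), and that $\cpm(\Lambda_{a_j,a_{j+1}}) = Sh(S^1,\Lambda_{a_j,a_{j+1}})$ by Definition \ref{def:cpm1}.

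Third, I would identify the two diagrams. On vertices, Theorem \ref{thm:wheels} gives $\Perf(\bP^1(a_j,a_{j+1}))\cong Sh(S^1,\Lambda_{a_j,a_{j+1}}) = \cpm(\Lambda_{a_j,a_{j+1}})$, and $\Perf([*/\mu_{a_j}])\cong \bC\text{-mod}^{\oplus a_j}\cong \cpm(R_{a_j})$ (decomposing the representation category of $\mu_{a_j}$ into characters). On arrows, I must check that the algebraic restriction-to-a-stacky-point functor $S^i_\chi$ is carried, under these identifications, to the symplectic restriction-to-a-spoke functor $Res^i_\chi$. This is precisely the refinement recorded in Remark \ref{rem:stalk}: the equivalence $\Phi$ of Theorem \ref{thm:wheels} can be chosen to intertwine the stalk functors $S^i_\chi$ with the spoke-restriction functors $Res^i_\chi$. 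Granting that, the two cosimplicial diagrams are levelwise equivalent via $\Phi$, their homotopy limits agree, and the theorem follows.

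The main obstacle is the bookkeeping in the last step: one must choose the per-component equivalences $\Phi_j$ for the wheels $\Lambda_{a_j,a_{j+1}}$ \emph{compatibly}, so that the characters labelling the ``$\infty$'' spokes of $\Lambda_{a_j,a_{j+1}}$ agree with the characters labelling the ``$0$'' spokes of $\Lambda_{a_{j+1},a_{j+2}}$ — i.e. both the cyclic-order constraint in the footnote to Remark \ref{rem:stalk} and the ``no braiding'' coherence condition of Footnote \ref{foot:braid} must be satisfied simultaneously around the whole chain or cycle. In the genus-$1$ case this is a genuine global constraint (one goes around the cycle and must return to a consistent labelling), and checking that a consistent choice exists — and that the resulting squares of functors commute on the nose, not merely up to unspecified homotopy, so that Lemma \ref{lem:ho eq} applies — is where the real work lies; the rest is assembling known equivalences along a descent diagram. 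I would handle this by fixing, once and for all, compatible cyclic labellings of all spokes of $\Lambda^i_A$ by the relevant character groups as part of the data of the push-out presentation, exactly as the $W$-vector determines the gluing, and then appealing to Remark \ref{rem:stalk} component-by-component with those labels.
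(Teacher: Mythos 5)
Your proposal follows essentially the same route as the paper's (very terse) proof: both sides are presented as equalizers/homotopy limits over the wheel decomposition — via Theorem \ref{thm:perf} on the algebraic side and the sheaf property of $\cpm(-)$ on the symplectic side — and then identified levelwise using Theorem \ref{thm:wheels} and the stalk-compatibility of Remark \ref{rem:stalk}. Your discussion of the global consistency of the character labellings around the cycle is a correct elaboration of the point the paper delegates to Remark \ref{rem:stalk} and its footnote.
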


\begin{proof}
There is a covering of $\Lambda^i_A$ given by wheels $W_i= \Lambda_{a_i, a_{i+1}}$. Then, by the sheaf property of $\cpm$ we have an equalizer diagram
$$\CPM(C^0_A) \to \CPM(\coprod W_i) \rightrightarrows \CPM(\coprod W_i \cap W_{i+1}).$$
The Theorem then follows immediately from Theorem \ref{thm:wheels} (and Remark \ref{rem:stalk}), and Theorem \ref{thm:perf}. 
\end{proof}

As discussed above, dualizable ribbon graphs $\Lambda^i_A$ arise as skeleta of punctured curves of genus $i$ with appropriate Stein structure. Granting Conjecture \ref{conj}, Theorem \ref{thm:hms} can therefore be interpreted as a HMS statement, relating punctured symplectic surfaces, and degenerate, nodal algebraic curves, having equal genus $i \in \{0,1\}$. In particular, this confirms the well known mirror symmetry heuristics according to which the mirror of a symplectic torus with $n$ punctures should be a a cycle of $n$ rational curves.\footnote{Kontsevich announced related results in \cite{K1}. HMS for the nodal $\bP^1$ is also treated in \cite{LPe}.}

\section{Tcnc curves and Fourier-Mukai equivalences}
\label{sec:deq}
The Fukaya category of a punctured Riemann surface $\Sigma$ should depend solely on the symplectic geometry of $\Sigma$, which is encoded in its genus, and in the number of punctures. In view of Conjecture \ref{conj}, this suggests that if $D$ and $D'$ are (chordal) ribbon graphs arising as skeleta of a unique punctured surface $\Sigma$ equipped with two different Stein structures, there should be an equivalence $\cpm(D) \cong \cpm(D')$. 

In this section we sketch a proof that this is indeed the case for dualizable ribbon graphs, by introducing a simple graphical calculus which will enable us to construct this equivalence in a step-by-step fashion. A precise statement of our theorem is collected below. If $n \in \bN$, we denote ${\bf 1}(n) \in \bN^n$ the $n$-tuple filled with $1$-s. By slight abuse of notation, we shall also denote $(a, {\bf 1}(n), b)$ a tuple of length $2 + n$, of the form $(a, 1, 1, \dots, 1, b)$.

\begin{theorem}
\label{thm:move}
If $A = (a_1, \dots, a_m)$ is a tuple of positive integers, there are equivalences
\begin{enumerate}
\item $\cpm(\Lambda^0_A) \cong \cpm(\Lambda^0_{A'})$, where $A'= (a_1, {\bf 1}(a_2 + \dots + a_{m-1}), a_m)$,
\item $\cpm(\Lambda^1_A) \cong \cpm(\Lambda^1_{A'})$, where $A' = {\bf 1}(a_1 + \dots + a_n)$.
\end{enumerate}
\end{theorem}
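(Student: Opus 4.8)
The plan is to exploit the sheaf property of $\cpm(-)$ together with Theorem \ref{thm:wheels} and Remark \ref{rem:stalk} in order to realize each equivalence as the effect of a local modification of the ribbon graph. The key point is that both sides of (1) (resp. (2)) are, by Theorem \ref{thm:hms}, equivalent to categories of perfect complexes on tcnc curves with the same genus; hence it suffices to exhibit, at the level of ribbon graphs, a sequence of elementary ``moves'' — each one replacing a piece of the graph by another piece computing an equivalent category of sections — whose composite carries $\Lambda^i_A$ to $\Lambda^i_{A'}$. The basic move is the following: a wheel $\Lambda_{a,b}$ with $a+b$ spokes can be replaced by a chain of trivalent wheels $\Lambda_{1,1}, \dots$ glued along single spokes (an ``expansion''), and conversely a bivalent vertex on the zero section can be contracted. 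I would phrase each move as an isomorphism in $\Chord$ after a deformation, appealing to Footnote \ref{foot:def} and the remark that $\cpm$ depends only on the deformation class of the underlying ribbon graph; alternatively, and more safely, I would verify directly that the homotopy limit computing $\cpm$ of the two local models agrees, using Lemma \ref{lem:ho eq} and Theorem \ref{thm:quiverquiver}.

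\textbf{Steps.} First I would set up the graphical calculus: introduce notation for the ``building blocks'' (wheels $\Lambda_{a_i,a_{i+1}}$ glued along rays $R_{a_i}$) and for the two elementary moves — (M1) \emph{splitting a wheel}, replacing $\Lambda_{a,b}$ by several wheels of smaller total spoke count joined along the zero section, with the spoke-labellings of Remark \ref{rem:stalk} tracked throughout; and (M2) \emph{absorbing a $\Lambda_{1,1}$}, i.e. removing a trivalent wheel that contributes a single up- and down-ward spoke by merging it into a neighboring component. Second, I would prove that M1 and M2 induce quasi-equivalences on $\cpm$: cover the relevant open piece of the graph by wheels and their pairwise intersections (as in the proof of Theorem \ref{thm:hms}), write the equalizer diagram via Lemma \ref{lem:ho eq}, and identify the two homotopy limits using the explicit quiver description of Theorem \ref{thm:quiverquiver} — this amounts to the fact that $\Rep$ of an $A_n$-quiver is insensitive to how one subdivides the chain, compatibly with the $\mathsf{Cone}$ functors attached to the spokes. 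Third, I would run the calculus: for (1), repeatedly apply M1 to each interior wheel $\Lambda_{a_i, a_{i+1}}$ ($2 \le i \le m-1$) to break all interior spoke-bundles into single strands, arriving at the graph with $W$-vector $(a_1, {\bf 1}(a_2 + \dots + a_{m-1}), a_m)$; for (2), do the same around the full cycle, noting that in the genus-$1$ case the two ``end'' components $a_1$ and $a_n$ are themselves interior to the cycle, so \emph{all} spoke-bundles get resolved into $1$'s, yielding $W$-vector ${\bf 1}(a_1 + \dots + a_n)$. Finally I would remark that this matches the derived-equivalence classification of tcnc curves: $\Perf(C^i_A) \cong \cpm(\Lambda^i_A) \cong \cpm(\Lambda^i_{A'}) \cong \Perf(C^i_{A'})$, and $\sum a_j$ is manifestly preserved by every move.

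\textbf{Main obstacle.} I expect the crux to be the bookkeeping in M1 — not the existence of the quasi-equivalence $\cpm(\Lambda_{a,b}) \cong \Sh(S^1,\Lambda_{a,b})$ (that is Theorem \ref{thm:wheels}), but checking that the local equivalences glue, i.e. that the squares in Remark \ref{rem:stalk} relating stalk/restriction functors on the two sides commute strictly (or at least coherently) enough that the induced map on homotopy limits is an equivalence. Concretely, when one splits $\Lambda_{a,b}$ the cyclic order on the spokes must be respected, and the ``no braiding'' coherence condition of Footnote \ref{foot:braid} must be preserved by each move; verifying this is where the ``step-by-step'' nature of the graphical calculus does real work, since one must check that the moves can be performed in an order that never forces a braid. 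A secondary technical point is that strictly speaking $\cpm$ as defined only takes chordal ribbon graphs of valency $\le 4$ as input, so each intermediate graph in the calculus must be checked to admit chordal structure; this is routine given that trivalent graphs always do, but it should be stated. Once these coherence issues are dispatched, the combinatorics of reducing $A$ to $A'$ is immediate.
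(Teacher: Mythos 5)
Your proposal follows essentially the same route as the paper: the paper likewise proves the theorem by a graphical calculus of local elementary moves on chordal ribbon graphs, each verified to preserve sections of $\cpm(-)$ via the sheaf property, Lemma \ref{lem:ho eq} and the quiver/cone descriptions of Theorem \ref{thm:quiverquiver} (the key move being an application of the octahedral axiom), and then applied repeatedly to replace each interior multiplicity-$n$ bundle of connecting edges by $n$ single strands. Your identification of the braiding coherence of Footnote \ref{foot:braid} as the main delicate point also matches the paper, which organizes the reduction around building-block subgraphs $A_n$ (braided strands) and $B_n$ (loops) precisely to control this.
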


Our interest in this result depends on the fact that, using the dictionary provided by Theorem \ref{thm:hms}, it can be translated in a statement regarding derived equivalences of tcnc curves.
\begin{corollary}
\label{cor:fm}
If $A = (a_1, \dots, a_m)$ is a tuple of positive integers, there are equivalences
\begin{enumerate}
\item $\Perf(C^0_A) \cong \Perf(C^0_{A'})$, where $A'= (a_1, {\bf 1}(a_2 + \dots + a_{m-1}), a_m)$,
\item $\Perf(C^1_A) \cong \Perf(C^1_{A'})$, where $A' = {\bf 1}(a_1 + \dots + a_n)$.
\end{enumerate}
\end{corollary}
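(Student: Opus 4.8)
The plan is to read the corollary off directly from the homological mirror symmetry equivalence of Theorem \ref{thm:hms} and the ribbon-graph equivalences of Theorem \ref{thm:move}. Fix $i \in \{0,1\}$ and a tuple $A = (a_1, \dots, a_m)$ of positive integers, and let $A'$ be the tuple produced by the corresponding clause of Theorem \ref{thm:move}; since $A'$ again consists of positive integers, $C^i_{A'}$ is a bona fide tcnc curve and $\Lambda^i_{A'}$ a bona fide dualizable ribbon graph. Applying Theorem \ref{thm:hms} once to $(i,A)$ and once to $(i,A')$ gives $\Perf(C^i_A) \cong \cpm(\Lambda^i_A)$ and $\Perf(C^i_{A'}) \cong \cpm(\Lambda^i_{A'})$, while Theorem \ref{thm:move} supplies $\cpm(\Lambda^i_A) \cong \cpm(\Lambda^i_{A'})$. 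Composing these equivalences of dg categories,
$$
\Perf(C^i_A) \cong \cpm(\Lambda^i_A) \cong \cpm(\Lambda^i_{A'}) \cong \Perf(C^i_{A'}),
$$
which is precisely clause (1) of the corollary when $i = 0$ and clause (2) when $i = 1$.

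The only thing that needs to be checked is that the dualizable ribbon graph $\Lambda^i_{A'}$ appearing in Theorem \ref{thm:move} is the same graph that Theorem \ref{thm:hms} pairs with $C^i_{A'}$. This is built into the indexing conventions of Section \ref{sec:proof}: dualizable ribbon graphs and tcnc curves are both classified by a genus in $\{0,1\}$ together with a tuple of positive integers --- for the graph, the numbers of strands connecting the components of the zero section; for the curve, the $W$-vector --- and Theorem \ref{thm:hms} is formulated so that these two parametrizations are identified. It is also worth recording the mirror-symmetric content of the corollary. In clause (1) the entries of $A'$ sum to $a_1 + (a_2 + \dots + a_{m-1}) + a_m$ and in clause (2) to $a_1 + \dots + a_m$, so in both cases the total $\sum_j a_j$ is unchanged; by the discussion of Section \ref{sec:proof}, $\Lambda^i_A$ and $\Lambda^i_{A'}$ are then skeleta, for two different Stein structures, of one and the same punctured Riemann surface, and Conjecture \ref{conj} predicts $\cpm(\Lambda^i_A) \cong \cpm(\Lambda^i_{A'})$ --- the statement that the Fukaya category depends only on genus and number of punctures, which is the mirror incarnation of the corollary.

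The deduction above has no real content; it merely transports Theorem \ref{thm:move} across the HMS equivalence. The one mild point is that Theorem \ref{thm:hms} must be available for all the dualizable graphs involved, including the highly degenerate $\Lambda^i_{A'}$ built from many copies of the trivial wheel $\Lambda_{1,1}$, which is covered by the generality in which it is proved. The substantive work --- and the main obstacle --- lies entirely in Theorem \ref{thm:move}: one must set up a graphical calculus of elementary local moves on dualizable ribbon graphs, exhibit a finite sequence of such moves carrying $\Lambda^i_A$ to $\Lambda^i_{A'}$, and verify that each move induces a quasi-equivalence of the associated $\cpm$-categories --- the latter by a direct computation with quiver representations on the small open pieces where the move is supported, using Theorem \ref{thm:quiverquiver}, together with the sheaf property of $\cpm$ to globalize. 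That is a separate argument, logically prior to and independent of the reduction given here.
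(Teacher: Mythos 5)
Your proposal is correct and matches the paper's own (implicit) derivation: the corollary is obtained by transporting the equivalences of Theorem \ref{thm:move} across the HMS equivalence of Theorem \ref{thm:hms}, exactly as you compose them. The paper offers no further argument, so your write-up, including the observation that all substantive content resides in Theorem \ref{thm:move}, is essentially the intended proof.
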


Denote $X_n$ a cycle of rational curves with $n$ components. Corollary \ref{cor:fm} implies in particular that there is an equivalence $\Perf(X_n) \cong \Perf([X_1/\mu_n])$, where $\mu_n$ is the group of $n$-th roots of unity, acting on $X_1$ in the obvious manner, and $[X_1/\mu_n]$ is the quotient stack. As we shall explain, this result can be interpreted as a generalization to the singular case of Mukai's classic work on derived equivalences of smooth elliptic curves (and, more generally, of principally polarised abelian varieties) \cite{M}. Recall that Mukai shows that, if $X$ and $X^{\vee}$ are dual abelian varieties, there is a non-trivial equivalence $D^b(Coh(X)) \cong D^b(Coh(X^{\vee}))$, which he defines via a pull-push formalism, by taking as kernel the universal bundle on the product $X \times X^{\vee}$. 

As in the smooth case, the nodal projective line $X_1$ is isomorphic to its dual $X_1^{\vee}$, which is the moduli space of rank $1$, degree $0$, torsion-free sheaves over $X_1$ \cite{BK1}. Further, one can show that $X_n$, which is the $n$-fold cover of $X_1$, parametrizes \emph{$\mu_n$-equivariant} sheaves on $X_1$ satisfying the properties just listed. In this perspective, we can interpret the covering map $X_n \rightarrow X_1$ as induced by `forgetting the equivariant structure.' Thus, $X_n$ is isomorphic to the moduli space of rank $1$, degree $0$, torsion-free sheaves over the quotient stack $[X_1/\mu_n]$ or, in other words, $X_n$ is dual, in the sense discussed above, to $[X_1/\mu_n]$.

The existence of an equivalence $\Perf(X_n) \cong \Perf([X_1/\mu_n])$ therefore fits well with what we would expect based on the smooth case.\footnote{Note that any such equivalence would extend to an equivalence of the full derived categories, see Theorem 1.2 in \cite{Ba1}.} Note that the case $n=1$ was also studied by Burban and Kreussler \cite{BK1}, who use the theory of spherical functors to define a non-trivial derived equivalence $D^b(Coh(X_1)) \cong D^b(Coh(X_1))$ having the required properties.\footnote{In \cite{Si}, extending results of \cite{BK1}, we defined an action of the mapping class group of a torus with $n$ punctures on $D^b(Coh(X_n))$. The argument we shall describe below can be interpreted, roughly, as defining an action of an appropriate version of the mapping class \emph{groupoid}. For a definition of spherical functor, see \cite{ST}.} 

\subsection{Elementary moves}
\label{sec:em}
In this section we introduce a set of operations, called \emph{elementary moves}, which can be used to modify the geometry of chordal ribbon graphs while preserving the global sections of $\cpm(-)$. First, however, we spell out the behaviour of $\cpm(-)$ on some especially simple chordal ribbon graphs, which can be used as building blocks for all trivalent graphs in $\Chord$.

Let $E$ be a ribbon graph with empty vertex set, and underlying $CW$ complex homeomorphic to $\bR$. We can equip $E$ with two distinct chordal structures $(E, W)$, by setting either $W = E$, or $W = \varnothing$. In both cases, $\cpm(E,W) \cong \Cmod$. Thus, if $(D,Z)$ is a chordal ribbon graph, and $\{e_i\}_{i \in I}$ is the set of edges of $D$, restriction to the edges yields \emph{stalk} functors, indexed by $I$, 
$ 
Res_i:\cpm(D,Z) \rightarrow \Cmod \cong \cpm(e_i),
$
which generalize the `microlocal stalks' discussed in Section \ref{sec:mic}. It is often convenient to indicate an object $L \in \cpm(D,Z)$ by assigning the collection of its stalks $Res_i(L)$, which can be visualized as labels attached to the edges $e_i$ of $D$. 

A \emph{pitchfork} is a chordal ribbon graph $P = (D, Z)$, such that $D$ is isomorphic to the union of the real line $\bR$, and an upward spoke $R^+$. As shown by Figure \ref{fig:pitch} above, there are only two possible choices of zero section, which yield inequivalent chordal ribbon graphs $P_1$, $P_2$. In either case, using Theorem \ref{thm:quiverquiver}, we can see that the global sections of $\cpm(-)$ are given by $Rep(\bullet \rightarrow \bullet)$. The edges of the graphs represented in Figure \ref{fig:pitch} are decorated with labels corresponding to an object $L = (V \stackrel{f}{\rightarrow} W) \in Rep(\bullet \rightarrow \bullet)$. Thus, for instance, the picture indicates that the stalk of $L \in \cpm(P_1) = Rep(\bullet \rightarrow \bullet)$ on any point lying on the edge $e_2$, is isomorphic to $\Cone(f)$. 

\begin{figure}

\includegraphics[height=2in]{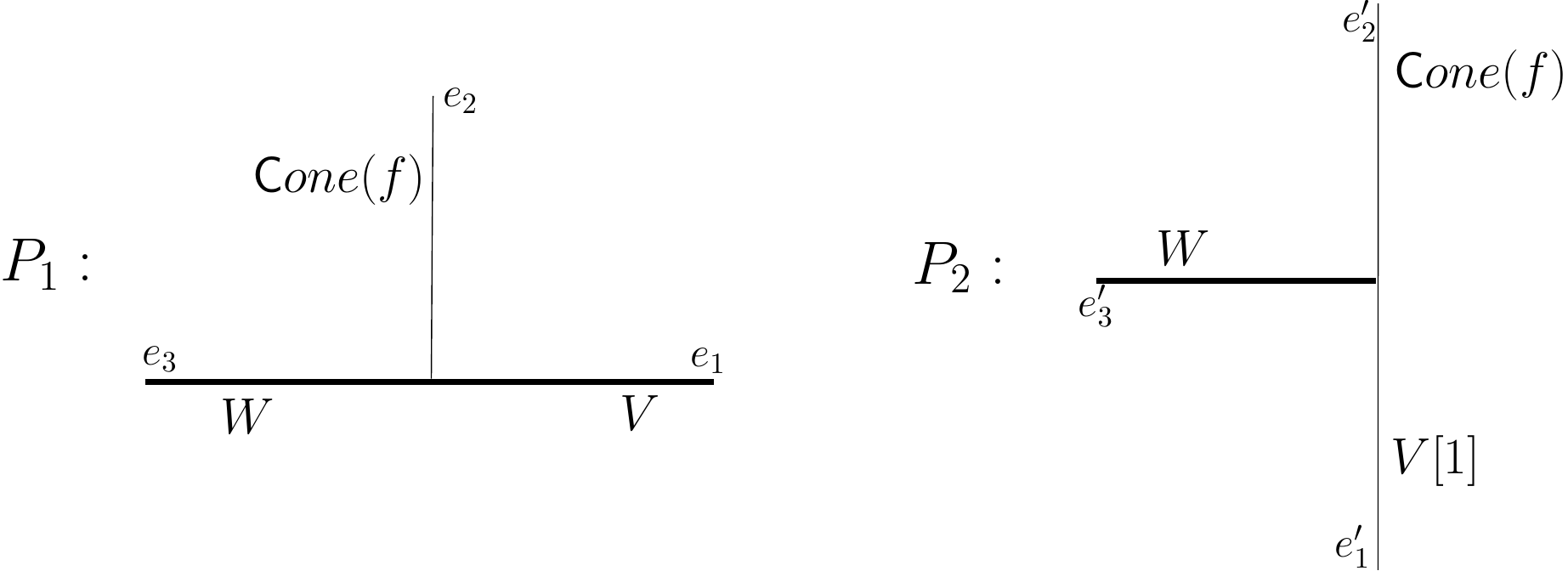}
\caption{Up to isomorphism, there are only two chordal structures on a pitchfork, which are represented above, and are denoted $P_1$ and $P_2$.}
\label{fig:pitch}
\end{figure}

The set of the \emph{Elementary Moves}, or $EM$-s, which we shall use in the proof of Theorem \ref{thm:move}, is given in the table below (Figure \ref{fig:moves}). Note that the ribbon graphs considered in Figure \ref{fig:moves} are obtained by gluing together pitchforks along common edges, and thus we can easily compute the sections of $\cpm(-)$ over them using Lemma \ref{lem:ho eq}.

\begin{figure}
\includegraphics[height=6in]{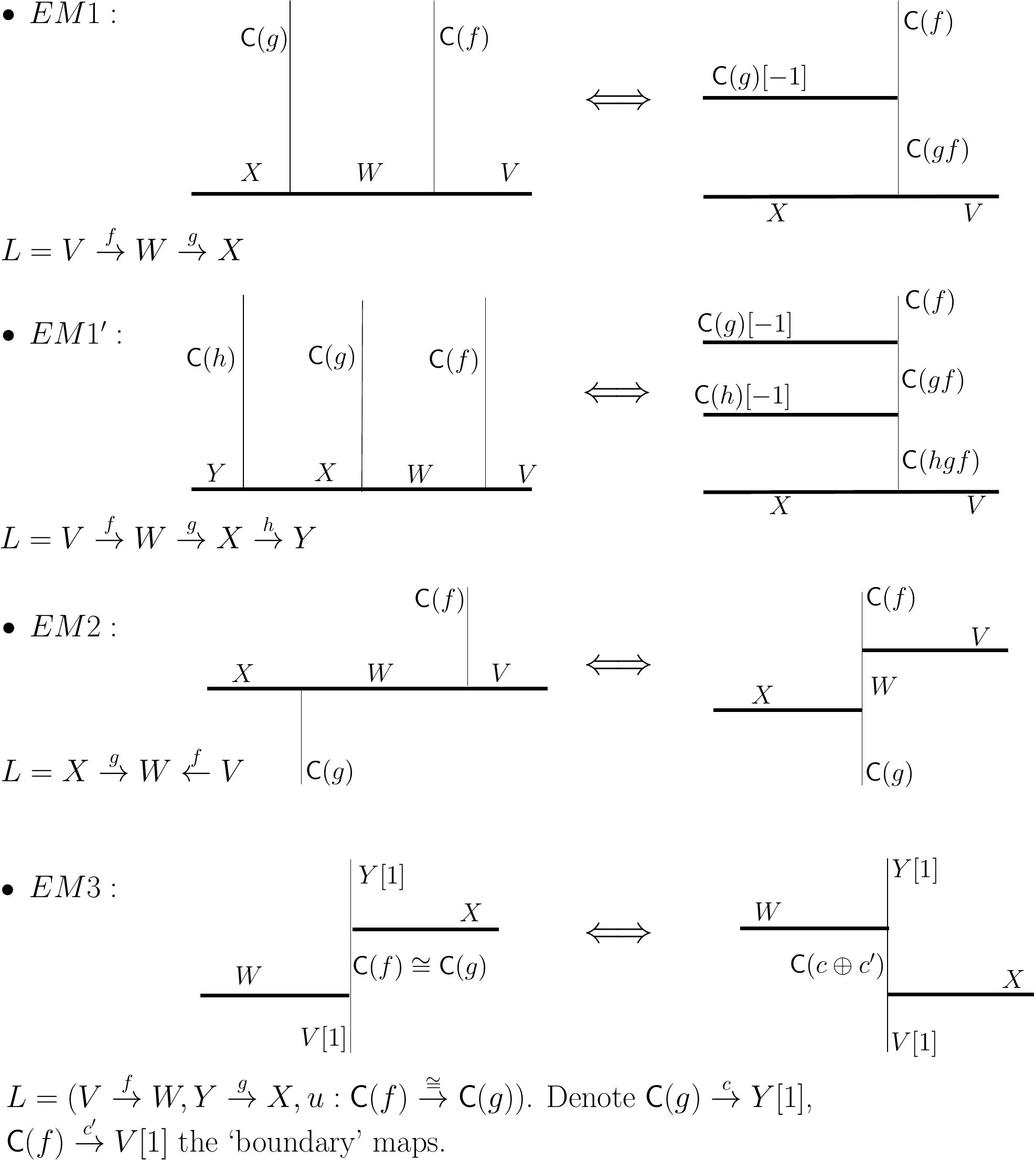}
\caption{}
\label{fig:moves}
\end{figure}

For each elementary move $EM_i$, let $(D_{l_i}, Z_{l_i})$ be the graph appearing on the left of the `$\Leftrightarrow$' symbol, and $(D_{r_i}, Z_{r_i})$ the graph appearing on the right. $EM$-s preserve global sections of $\cpm(-)$, and there is a preferred isomorphism $\Phi_i: \cpm(D_{l_i}, Z_{l_i}) \cong \cpm(D_{r_i}, Z_{r_i})$. We have labelled the edges of $(D_{l_i}, Z_{l_i})$, and $(D_{r_i}, Z_{r_i})$, with the stalks of $L \in \cpm(D_{l_i}, Z_{l_i})$, and $\Phi_i(L) \in \cpm(D_{r_i}, Z_{r_i})$, respectively. This schematics gives enough information for reconstructing $\Phi_i$ entirely. Below, we shall see how this works in a concrete example. Also, in order to simplify notations, we have indicated the cone of a map $f: V \rightarrow W \in \Cmod$ simply by $\mathsf{C}(f)$. 

Although $EM1'$ can be obtained simply by iterating $EM1$, we have inserted it in the table given in Figure \ref{fig:moves}, because in Section \ref{sec:pfmo} it will be convenient to apply this transformation directly, without factoring it into simpler $EM$-s. 
The proof that elementary transformations do in fact preserve sections of $\cpm(-)$ is not hard, and we omit the details. However, as an example of the kind of arguments involved, it might be useful to discuss briefly the case of $EM1$.

Note that, following the notations of Figure \ref{fig:pitch},  $(D_{r_1}, Z_{r_1})$ can be constructed by gluing together edges $e_2$ of $P_1$, and $e'_3$ of $P_2$. An object in $\cpm(D_{r_1}, Z_{r_1})$ is given therefore by a triple of the form
$
(A \stackrel{m}{\rightarrow} B, C \stackrel{n}{\rightarrow} D, u: \Cone(m) \stackrel{\cong}{\rightarrow} \Cone(n)).    
$
Now, take $L = (V\stackrel{f}{\rightarrow}W\stackrel{g}{\rightarrow}X)$ in $\cpm(D_{l_1}, Z_{l_1})$. Since $\cpm(D_{l_1}, Z_{l_1})$ is a dg (pre-)triangulated category, it follows from the \emph{octahedral axiom} that there is a map 
$p: \mathsf{C}(gf)[-1] \rightarrow \mathsf{C}(g)[-1]$, and a natural isomorphism $\mathsf{C}(p) \cong \mathsf{C}(f)$. Thus we can define $\Phi_1$ on objects by setting
$$
\Phi_{1}(L) = (V \stackrel{f}{\rightarrow}W, \mathsf{C}(gf)[-1] \stackrel{p}{\rightarrow} \mathsf{C}(g)[-1], \mathsf{C}(f) \cong \mathsf{C}(p)),
$$
and the definition on morphisms is obvious.

\subsection{The proof of Theorem \ref{thm:move}}
\label{sec:pfmo}
Figure \ref{fig:blocks} represents two different kinds of chordal ribbon graphs, which are denoted $A_n$, and $B_n$, with $n \in \bN_{>0}$. All tri-valent dualizable ribbon graph can be assembled by gluing along their external edges a certain number of copies of graphs of type $A$ and $B$. In order to prove Theorem \ref{thm:move}, it is therefore enough to show that for every $n$ there exists an equivalence $\Phi_n:\cpm(A_n) \cong \cpm(B_n)$, with the property that $\Phi_n$ preserves the stalks on the $4$ external edges $e_i$, and $e'_i$.\footnote{It might be surprising that the strands of $A_n$ are `non-trivially braided.' The existence of the equivalence $\Phi_n$ depends, in fact, in a crucial way on the choice of this particular geometry. Note however that the zero section of a dualizable ribbon graph is a union of loops. Considered as edges of the larger graph, the strands in a subgraph of type $A$ can therefore be un-braided, cf. also Footnote \ref{foot:braid}.} In fact, starting with any dualizable ribbon graph, we can turn it into a dualizable graph having weight vector with entries all equal to $1$ by successively replacing its subgraphs of type $A$ with subgraphs of type $B$. The availabilty of the equivalences $\Phi_n$ insures that, while doing so, we are not affecting the sections of $\cpm(-)$ (up to isomorphism).

\begin{figure}[ht]
\includegraphics[height=2in]{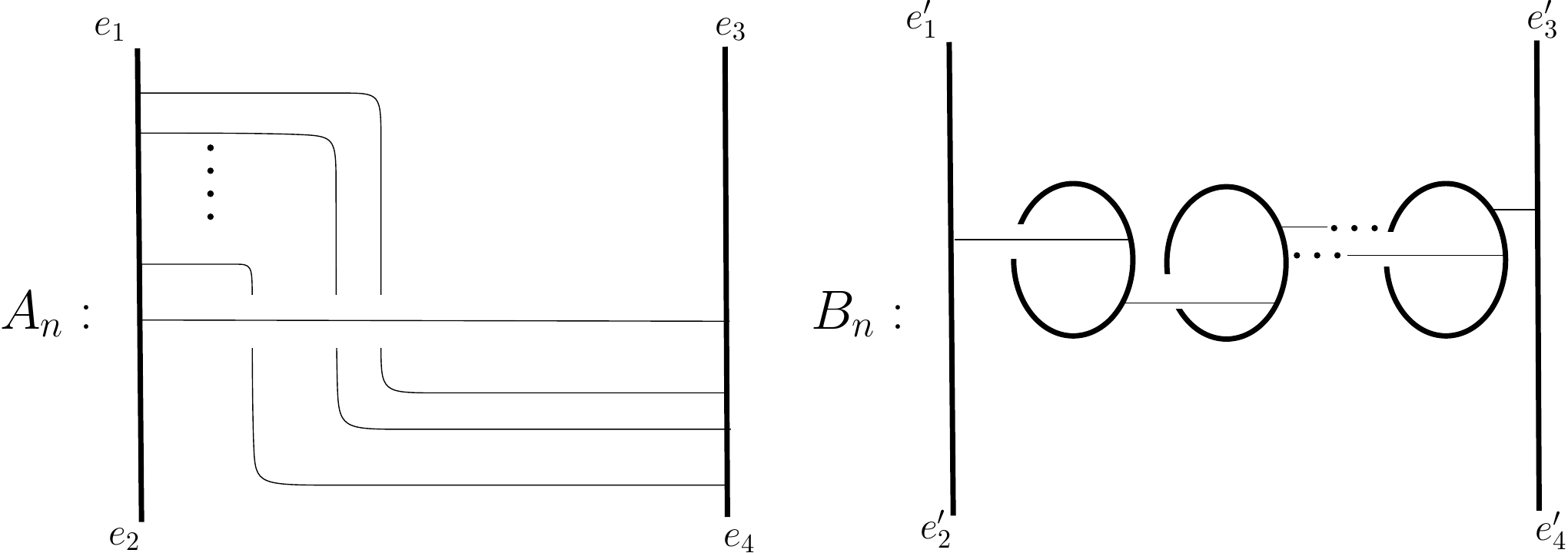}
\caption{The parameter $n \in \bN_{>0}$ indicates the number of strands in $A_n$, and loops in $B_n$.}
\label{fig:blocks}
\end{figure}
 
To define the $\Phi_n$-s, we have to break down the algorithm just described in yet smaller subroutines. For all $n \in \bN_{>0}$, we identify suitable subgraphs of $A_n$ isomorphic to the graphs appearing in Figure \ref{fig:moves}, and we modify their geometry via the appropriate elementary move. This gives rise to a new graph, which we can manipulate in similar manner, until, after a finite number of steps, we achieve the geometry of $B_n$. 
This procedure involves keeping track of what happens to the stalks across $EM$-s, to make sure that our operations, which are local in nature, determine equivalences at the level of global sections of $\cpm(-)$. This can be easily done, using the information on stalks given by Figure \ref{fig:moves}.

The proof of Theorem \ref{thm:move} can therefore be reduced to a simple graphical calculus, which is illustrated in Figures \ref{fig:n2}, and \ref{fig:n3} below, for the cases $n=2$, and $n=3$. At each step we apply an elementary move, which is explicitly indicated over the symbol `$\Leftrightarrow$.' It is important to notice that, is some of these steps, we are simulataneously applying the same elementary move to two distinct subgraphs. The strategy for proving the statement in the general case can be easily extrapolated from here, and therefore we will not discuss it in any further detail. 

\begin{figure}[H]
\includegraphics[height=1in]{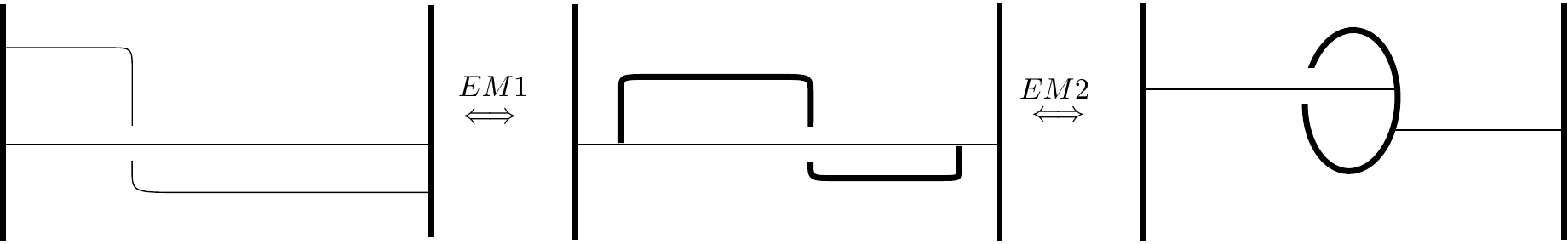}
\caption{The case $n=2$.}
\label{fig:n2}
\end{figure}

\begin{figure}[H]

\includegraphics[height=2.5in]{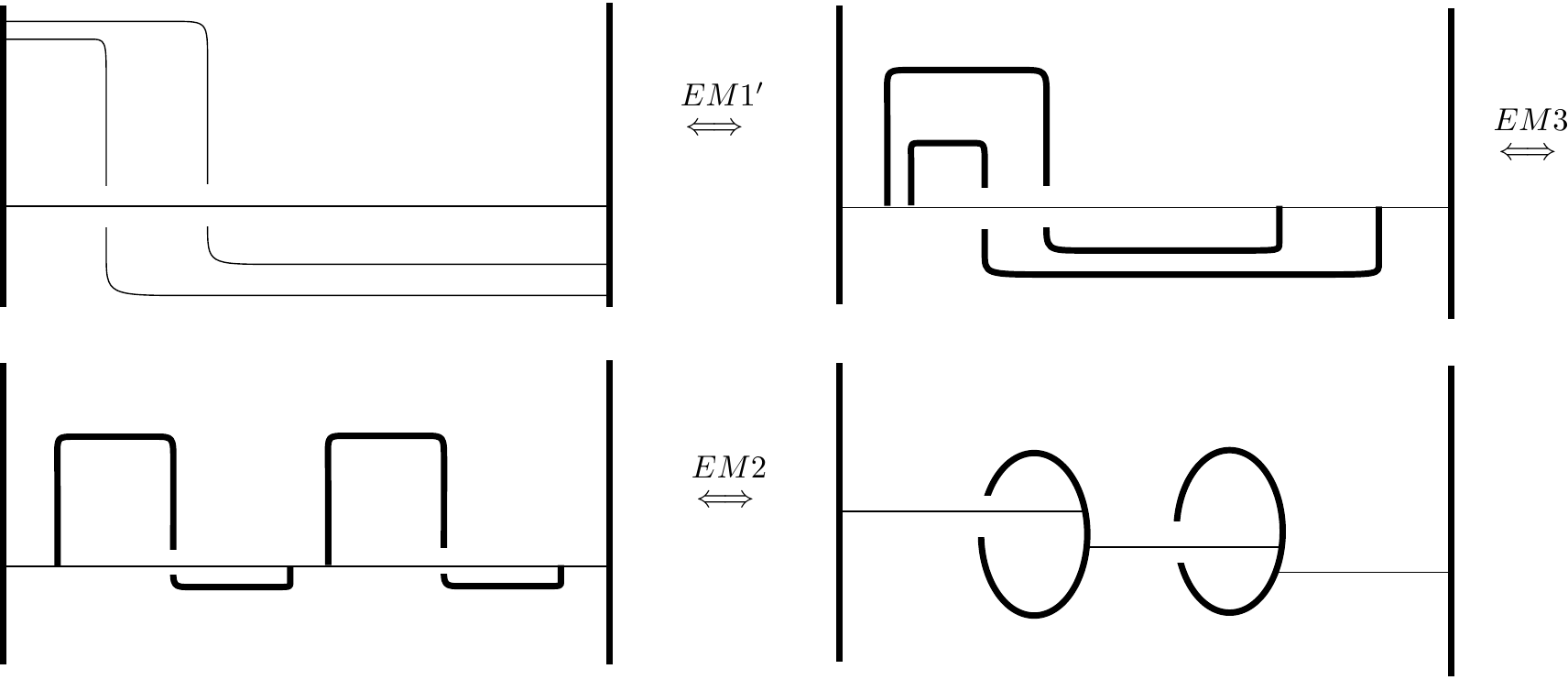}
\caption{The case $n=3$.}
\label{fig:n3}
\end{figure}

\end{document}